\definecolor{myblue}{rgb}{0.09,0.32,0.44} %22-84-113
\theoremstyle{plain}
\newtheorem{theorem}{Theorem}%[section]
\newtheorem{lemma}[theorem]{Lemma}
\newtheorem*{conjecture*}{Conjecture}
\newtheorem{claim}{Claim}
\theoremstyle{definition}
\newcommand{\Gal}{{\rm Gal}}
\newcommand{\eps}{\varepsilon}
\newcommand{\Y}{\Psi}
\newcommand{\red}{{\rm {red}}}
\newcommand{\ZZ}{\mathbb{Z}}
\newcommand{\NN}{\mathbb{N}}
\newcommand{\PP}{\mathbb{P}}
\newcommand{\CC}{\mathbb{C}}
\newcommand{\FF}{\mathbb{F}}
\def\clap#1{\hbox to 0pt{\hss#1\hss}}
\def\FF{\mathbb{F}}
\def\QQ{\mathbb{Q}}
\def\mB{\mathcal{B}}
\def\mQ{\mathcal{Q}}
\def\mR{\mathcal{R}}
\def\disc{\Delta} %discriminnat
\begin{document}

\title{Irreducible polynomials of bounded height}
\author{Lior Bary-Soroker}
\address{Raymond and Beverly Sackler School of Mathematical Sciences, Tel Aviv University, Tel Aviv 69978, Israel}
\email{barylior@post.tau.ac.il}
\author{Gady Kozma}
\address{Department of Mathematics, The Weizmann Institute of Science, Rehovot 76100, Israel}
\email{gady.kozma@weizmann.ac.il}
%\keywords{totally $S$-adic numbers, Hilbertian fields, irreducible polynomials}
%\subjclass[2010]{12E25, 12E30, 12F10}

\begin{abstract}
The goal of this paper is to prove that a random polynomial with i.i.d.\ random coefficients taking values uniformly in $\{1,\ldots, 210\}$ is irreducible with probability tending to $1$ as the degree tends to infinity. Moreover, we prove that the Galois group of the random polynomial contains the alternating group, again with probability tending to $1$.
\end{abstract}

\subjclass[2000]{11R09, 12E05, 26C05}
\maketitle

\section{Introduction}
The study of random polynomials has a long history. One direction, that we do no pursue here, is the study of the distribution of the roots of the polynomial. Notable phenomena about the roots are the logarithmic number of real roots, \cite{BP,Kac,LO} and the fact that they cluster near the unit circle with asymptotically uniform distribution \cite{ET50, SV95, IZ97}. Additional surprising phenomena have been observed in extensive simulations of the roots of random polynomials with coefficients $\pm 1$, such as the appearance of various approximate Julia sets \cite{B11}.
%The classical results on the number of real roots of a random polynomials are given by Bloch-P\'olya \cite{BP}, Littlewood-Offord \cite{LO}, and Kac \cite{Kac} (in different randomality models).

When the coefficients of the random polynomial 
\[
f(X) = X^n + \sum_{i=0}^{n-1} \zeta_i X^i
\] 
are integral; i.e., $\zeta_0,\ldots, \zeta_{n-1}\in \ZZ$, 
a natural question is about irreducibility over $\QQ$; that is to say, as an element of the ring $\QQ[X]$. 
A more subtle question is about the distribution of the Galois group $G_f$ of the random polynomial $f$, which is by definition the Galois group of the splitting field of $f$ over $\QQ$. This group $G_f$ may be considered as a subgroup of the symmetric group $S_n$ via the action on the roots of $f$. A basic property of Galois theory that connects irreducibility and Galois groups, is that 
\begin{quote}
\it $f$ is irreducible if and only if $G_f$ is transitive. 
\end{quote}
It is believed, and proved in many cases, that with high probability $f$ is irreducible, and in fact `the most irreducible' in the sense that its Galois group is the full symmetric group. 

In the simplest model, \emph{the large box model}, one fixes $n=\deg f$ and the coefficients $\zeta_0,\ldots, \zeta_{n-1}$ are i.i.d.\ random variables taking values uniformly in $\{-L,\ldots, L\}$ with $L\to \infty$. We do not know how far back it goes, but it is well known that
\begin{equation}\label{bbm_irre}
\lim_{L\to \infty} \PP(f \mbox{ is irreducible}) = 1.
\end{equation}
The state-of-the-art error term in \eqref{bbm_irre} is given by Kuba \cite{Kuba}: $\PP(f \mbox{ is reducible}) = O(L^{-1})$,  $n>2$. In the more subtle case of Galois groups, the first result goes  back at least to 
Van der Waerden \cite{VanDerWaerden} who proved that 
\begin{equation}\label{bbm_Gal}
\lim_{L\to \infty} \PP(G_f = S_n) = 1.
\end{equation}
Van der Waerden's error term in \eqref{bbm_Gal} is explicit: $\PP(G_f\neq S_n)=O(L^{-\frac{1}{6}})$. It was improved in \cite{Dietmann, Gallagher} using sieve methods, and the state-of-the-art result was given recently by Rivin \cite{Rivin} using an elementary method: $\PP(G_f\neq S_n)=O(L^{-1+\epsilon})$. This is nearly optimal, since 
\[
\PP(G_f\neq S_n) \geq \PP(G_f\leq S_{n-1}) \geq \PP(f(0)=0) = \frac{1}{2L+1}.
\]

Another model, which is the focus of investigation of this paper, is \emph{the restricted coefficients model}. In this model, the coefficients of $f$ are i.i.d.\ random variables taking values uniformly in a \emph{fixed} finite set, and the degree $n=\deg f$ grows to infinity. Two well studied examples are $\pm 1$ coefficients \cite[and references within]{someguy} and $0,1$ coefficients \cite{OP93} (with $\zeta_0=1$). 

In the latter model, Konyagin \cite{K99} proves that 
\begin{equation}
\lim_{n\to \infty} \PP( f \mbox{ has all irreducible factors of degree $\geq c n/\log n$}) =1.
\end{equation}
This implies that \cite[page 334]{K99} 
\[
\PP(f \mbox{ is irreducible}) \geq \frac{c}{\log n}.
\]
This is the state-of-the-art result on $\zeta_i\in \{0,1\}$, although it merely says that the probability does not tend to $0$ too rapidly, while the truth is that it tends to 1.

The restricted coefficient model is considered much more difficult than the large box model, mainly because the methods of the large box model are not applicable as they are based on reductions modulo large primes. 

Our main result seems to be the first establishment of the analogue of \eqref{bbm_irre} in the restricted coefficients model:

\begin{theorem}\label{thm:irred}
Let $L$ be a positive integer divisible by at least $4$ distinct primes. Let 
\[
f= X^n+\sum_{i=0}^{n-1} \zeta_i X^i
\] 
be a polynomial, where $\zeta_1,\zeta_2,\ldots$ are i.i.d.\ random variables taking values uniformly in $\{1,\ldots, L\}$. Then 
\[
\lim_{n\to \infty}\PP(f \mbox{ is irreducible}) =1.
\]
\end{theorem}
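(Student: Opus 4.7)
My plan is to bound $\PP(f \text{ reducible})$ via a union bound over the degree $k$ of a hypothetical monic integer factor:
\[
\PP(f \text{ reducible}) \;\leq\; \sum_{k=1}^{\lfloor n/2 \rfloor} p_k,
\qquad
p_k := \PP\bigl(f \text{ has a monic factor of degree } k \text{ in } \ZZ[X]\bigr),
\]
and to show the right-hand side tends to $0$. The key structural ingredient is a CRT-independence statement: since $L$ has at least four distinct prime divisors $p_1,\ldots,p_4$, the uniform distribution on $\{1,\ldots,L\}$ projects to the uniform distribution on each $\FF_{p_j}$, and the joint projection is the product, so the reductions $\bar f_j := f \bmod p_j \in \FF_{p_j}[X]$ are \emph{mutually independent}, each a uniform random monic polynomial of degree $n$.

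For \emph{small} $k$ (say $k \leq c\log n$ for a suitably small $c$), I would exploit the positivity of the coefficients. Cauchy's root bound, applied to $f$ and to the reversal $X^n f(1/X)$, places every complex root of $f$ in the annulus $1/(L+1) \leq |z| \leq L+1$. Hence any monic integer factor $g$ of degree $k$ has coefficients bounded by $\binom{k}{\lfloor k/2\rfloor}(L+1)^k$, a quantity independent of $n$, yielding a finite candidate list of size $C(k,L)$. For each candidate, $g\mid f$ is equivalent to $k$ integer-linear equations in the $\zeta_i$'s, and a multidimensional local limit theorem (or a Hal\'asz-style inequality, to handle possibly degenerate $g$) gives probability $O(n^{-k/2})$ with constants depending only on $g$. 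Summing across the candidate list and across $k$ in this range contributes $o(1)$.

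For \emph{large} $k$ (the residual range $c\log n < k \leq n/2$), the candidate list is too large for direct enumeration, and I would invoke the CRT-independence. Writing $E_k^{(j)} := \{\bar f_j \text{ has a monic divisor of degree } k\}$, any factorization $f=gh$ with $\deg g = k$ forces $\bar g_j$ to be a monic degree-$k$ divisor of $\bar f_j$ in $\FF_{p_j}[X]$; hence
\[
p_k \;\leq\; \PP\Bigl(\bigcap_{j=1}^4 E_k^{(j)}\Bigr) \;=\; \prod_{j=1}^4 \PP\bigl(E_k^{(j)}\bigr).
\]
Each factor $\PP(E_k^{(j)})$ is estimated via the classical analogy between factorization patterns of uniform random polynomials in $\FF_p[X]$ and cycle types of uniform random permutations in $S_n$: the expected number of monic degree-$k$ divisors of $\bar f_j$ is exactly $1$, and a second-moment or concentration refinement (in the spirit of the Erd\H os--Tenenbaum analysis of subset sums of cycle lengths) should yield a bound on each factor whose fourth power is summable over this range.

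The principal obstacle is precisely this last estimate. The individual probabilities $\PP(E_k^{(j)})$ do not tend to zero as $n\to\infty$ for fixed $k$, so the entire decay of $p_k$ must emerge from the fourfold product furnished by the four independent primes. Obtaining a bound on $\PP(E_k^{(j)})$ sharp enough for $\sum_{k>c\log n}\prod_{j=1}^4 \PP(E_k^{(j)})$ to vanish is the delicate step; it is also what forces the hypothesis that $L$ be divisible by at least four distinct primes, since with only three primes the analogous product of estimates would no longer be summable.
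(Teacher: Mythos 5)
Your overall architecture coincides with the paper's: reduce modulo the four distinct primes dividing $L$, note (via CRT and uniformity on $\{1,\ldots,L\}$) that the four reductions are independent uniform monic polynomials, bound the probability of a degree-$k$ integer factor by the product of the four probabilities that each reduction has a degree-$k$ divisor, and win because $4(1-\log 2)>1$ makes the resulting bound summable; small degrees are handled separately (the paper quotes Konyagin-type results in Lemma \ref{lem:Konyagin}, your enumeration-plus-anticoncentration sketch is in the same spirit). However, you have explicitly left unproved exactly the estimate on which everything hinges: an upper bound of the shape $\PP(E_k^{(j)})\le k^{\log 2-1+o(1)}$ (uniformly in $n$) for the event that a uniform polynomial over $\FF_{p_j}$ has a monic divisor of degree $k$. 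Calling this ``the principal obstacle'' is accurate, but it is not a technical refinement to be filled in later --- it is the core of the theorem, and the devices you gesture at do not reach it. A second-moment argument cannot give an upper bound here: the expected number of degree-$k$ divisors is exactly $1$, and the probability of existence decays only like $k^{-(1-\log 2)}$, a phenomenon governed by the arithmetic of subset sums of factor degrees, not by moment concentration.

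The paper supplies this estimate in two steps that your plan does not contain. First, the polynomial event is transferred to random permutations via the Arratia--Barbour--Tavar\'e total-variation bound (Lemma \ref{lem:ABT}); since that bound only controls the joint law of factors of degree $\ge r$ within error $C/r$, the event must first be made insensitive to small-degree factors. This is done by discarding the event $\mB$ that some degree $i<\log^2 k$ occurs with multiplicity $>\log^2 k$ (negligible by the exponential tail of Lemma \ref{lem:miexponentially}) and by allowing the target degree $l\in[k,2k)$ to be hit only up to a defect $\lambda_r<\log^6 k$ coming from small factors. Second, for permutations one needs the Eberhard--Ford--Green/Pemantle--Peres--Rivin input (Lemma \ref{lem:PPR}): outside an event of too many short cycles, the probability that a given $l$ is a sum of cycle lengths is at most $C\,l^{\log 2-1+2\eps}$, and $4(\log 2-1+2\eps)<-1$ beats the union over the $\sim k$ values of $l$ and the $(\log^6 k)^4$ defect choices. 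Note also a quantitative consequence you would have to accommodate: because the comparison with permutations costs $C/\log^2 k$ in total variation, one does not obtain your hoped-for per-$k$ bound $k^{-1-c}$ for the polynomial event; the paper instead proves the bound $C/\log^2 k$ for the dyadic block of degrees $[k,2k)$ and sums over powers of two starting from $\omega(n)\to\infty$, which is why the small-degree lemma is needed all the way up to some $\omega(n)$ rather than just up to $O(\log n)$. Without these ingredients your argument establishes the reduction to the key estimate but not the theorem.
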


Note that the smallest $L$ that satisfies the restriction of the theorem is 
\[
L=210=2\times 3\times 5\times 7.
\]  

Under the same conditions as in Theorem~\ref{thm:irred} we can also show that the Galois group of $f$ is either $S_n$ (the whole symmetric group) or $A_n$ (the alternating group, the group of even permutations). We find it worthwhile to note that the irreducibility of $f$ (or, in other words, transitivity of the Galois group) is the part that requires $4$ primes --- the part that concludes from irreducibility that the Galois group is either $S_n$ or $A_n$ works by reducing modulo one prime, so makes no restrictions on $L$. Here is the precise formulation:
\begin{theorem}\label{thm:galois}
  Let $f$ be as in Theorem \ref{thm:irred} but for any $L\ge 2$ (i.e.\ without the restriction that $L$ be divisible by $4$ primes). Then
  \[
  \lim_{n\to\infty}\PP(\mbox{the Galois group of $f$ is transitive and different from $A_n$ and $S_n$})=0.  
  \]
  \end{theorem}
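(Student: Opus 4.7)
The plan is to reduce $f$ modulo a single prime $p$ dividing $L$ (such a prime exists since $L\ge 2$) and combine the resulting Frobenius element with a theorem of \L{}uczak and Pyber about random permutations. Since $p\mid L$, the reductions $\zeta_i\bmod p$ are i.i.d.\ uniform on $\mathbb{F}_p$, so $\bar{f}:=f\bmod p$ is a uniformly random monic polynomial of degree $n$ over $\mathbb{F}_p$. With probability $1-O(1/p)$ the polynomial $\bar{f}$ is squarefree, and in that case Dedekind's theorem furnishes a Frobenius element $\sigma_p\in G_f$ whose cycle type on the $n$ roots of $f$ is the multiset of degrees of the irreducible factors of $\bar{f}$.

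The second ingredient is the classical analogy between the factorization statistics of a random polynomial over $\mathbb{F}_p$ and the cycle statistics of a uniformly random permutation in $S_n$: as $n\to\infty$ with $p$ fixed, the number of irreducible factors of $\bar{f}$ of each degree $d$ is asymptotically Poisson with mean $\sim 1/d$, and these counts are asymptotically independent across $d$, matching the cycle-count distribution of a uniformly random permutation. In particular, the cycle type of $\sigma_p$ converges in distribution (on the space of partitions of $n$) to that of a uniformly random permutation of $\{1,\dots,n\}$.

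The last ingredient is the theorem of \L{}uczak--Pyber: the proportion of permutations in $S_n$ that belong to some transitive subgroup of $S_n$ other than $A_n$ or $S_n$ tends to $0$ as $n\to\infty$. Since the property ``$\pi$ lies in some transitive subgroup $\neq A_n,S_n$'' depends only on the conjugacy class of $\pi$, i.e.\ on its cycle type (the family of transitive subgroups of $S_n$ is closed under $S_n$-conjugation), the same conclusion transfers to $\sigma_p$: with probability tending to $1$, the cycle type of $\sigma_p$ is not realized in any transitive subgroup of $S_n$ other than $A_n$ or $S_n$. Since $G_f$ is transitive in the event being estimated and contains $\sigma_p$, it must satisfy $G_f\supseteq A_n$.

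The main obstacle is the quantitative transfer of the \L{}uczak--Pyber bound from uniform random permutations to Frobenius elements of random polynomials. Both distributions have the same limiting cycle-type statistics, so the transfer is conceptually direct; concretely, because \L{}uczak and Pyber argue via explicit combinatorial properties of the cycle type (presence of suitably sized prime cycles, incompatibility with wreath/affine/sporadic structures), one repeats their counting in the polynomial setting using the standard generating-function machinery for monic polynomials over $\mathbb{F}_p$, plus a routine estimate absorbing the negligible event that $\bar{f}$ is not squarefree. A direct Jordan-style argument seeking an irreducible factor of prime degree $\ell\in(n/2,n-3]$ would give probability only $\Theta(1/\log n)$ and so is insufficient; working instead with the full cycle type, in the spirit of \L{}uczak--Pyber, is what pushes the probability to $1$.
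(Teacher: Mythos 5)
There is a genuine gap, and it sits exactly where the paper's proof does its real work. Your plan discards the event that $\bar f=f\bmod p$ is not squarefree as ``negligible, probability $O(1/p)$'' --- but here $p$ is a \emph{fixed} prime dividing $L$ (possibly $p=2$), so this event has constant probability $1/p$, not probability tending to $0$; it cannot be absorbed. The paper instead writes $\bar f=\phi\psi$ with $\phi$ squarefree and $\psi$ squarefull and coprime to $\phi$, shows that $\deg\psi\le n^{\alpha}$ with probability tending to $1$ (the probability that a square of degree $k$ divides $\bar f$ is $p^{-k/2}$), and, since Dedekind is unavailable, produces a Frobenius lift $\tau\in G_f$ by choosing a prime $\mathfrak P$ of the ring of integers of the splitting field above $p$ and using surjectivity of the decomposition group onto $\Gal$ of the residue extension. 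The cycle type of $\tau$ then agrees with the degree sequence of the irreducible factors of $\bar f$ only \emph{up to cycles of total length $\le n^{\alpha}$}, which is why the paper cannot quote \L uczak--Pyber as a black box and instead proves the perturbation-robust version, Lemma \ref{lem:all transitive}: no transitive $G\not\ge A_n$ meets $\Y(\sigma,n^{\alpha})$, with probability tending to $1$.

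A second, related inaccuracy: for fixed $p$ the cycle type of the Frobenius element does \emph{not} converge in distribution to that of a uniform permutation. The small-degree factor counts (with multiplicity) differ from small-cycle counts by a constant in total variation --- e.g.\ over $\FF_2$ the expected number of linear factors tends to $2$, not $1$; the Poisson$(1/d)$ heuristic is only valid when $q^{d}$ is large. The correct quantitative statement is Lemma \ref{lem:ABT} (Arratia--Barbour--Tavar\'e): the joint law of factor counts in degrees $\ge r$ is within $C/r$ in total variation of the corresponding cycle-count law. This forces the bad event to be phrased so that it depends only on cycles of length $\ge n^{\alpha}$ (the event $E$ in the paper), and again this is exactly what Lemma \ref{lem:all transitive} supplies. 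Your fallback suggestion --- redo the \L uczak--Pyber counting directly for factorization types over $\FF_p$ --- is a conceivable alternative route, but it is not carried out in your sketch, and as written it would still have to confront both the multiplicity (squarefull) issue and the mismatch of small-degree statistics; the paper's route (robust permutation lemma plus ABT coupling plus the decomposition-group Frobenius) is precisely the machinery that handles these two points.
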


The proofs of Theorems \ref{thm:irred} and \ref{thm:galois} are in \S\S\ref{sec:boooring}-\ref{sec:proof}. Section \ref{sec:boooring} contains well-known facts about the connection between random polynomials and random permutations; and well-known, or at least unsurprising, facts about random permutations. Experts can safely skip to \S\ref{sec:proof} which contains the core of the proof.  
In \S\ref{sec:simulations} we include some heuristics and simulations related to the question that we could not resolve: is the Galois group $A_n$ or $S_n$?

\emph{Note.} After this paper was put on the arXiv, Breuillard and Varj\'u announced results which hold for more general distributions of the coefficients, but are dependent upon a generalised Riemann hypothesis. See \cite{BV}.

\subsection*{Acknowledgments}
The authors wish to thank Nir Avni for a discussion that turned out to be crucial for the proof of Theorem \ref{thm:irred}. Noga Alon advised us on random permutations, saving us much time. Igor Rivin shared with us many exciting simulation results. Alexei Entin, Zeev Rudnick, Ofer Zeitouni and Shoni Gilboa helped with insight and advice.

LBS was partially supported by Israel Science Foundation grant 953/14. GK was partially supported by Israel Science Foundation grant 1369/15, by the Jesselon Foundation and by Paul and Tina Gardner.

\section{Generalities}\label{sec:boooring}
\subsection{Properties of random permutations}
\begin{lemma}\label{lem:no double divisor}
With probability tending to 1, there is no $l>\log^3n$ which divides the lengths of two distinct cycles of a random permutation.
\end{lemma}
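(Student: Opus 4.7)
\medskip\noindent\textbf{Proof plan.} The approach is a straightforward first-moment estimate. For each integer $l$ with $\log^3 n < l \le n/2$, let $N_l$ denote the number of cycles of the random permutation whose length is divisible by $l$ (for $l>n/2$ one necessarily has $N_l\le 1$, so nothing to do). The event in question is $\{\exists l > \log^3 n : N_l \ge 2\}$, and by a union bound together with Markov's inequality,
\[
\PP\bigl(\exists l > \log^3 n : N_l \ge 2\bigr) \;\le\; \sum_{l > \log^3 n} \PP(N_l \ge 2) \;\le\; \sum_{l > \log^3 n} \mathbb{E}\!\left[\binom{N_l}{2}\right].
\]
So the plan reduces to estimating $\mathbb{E}[\binom{N_l}{2}]$ for each $l$, and summing.

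To compute the second factorial moment, I would expand $\binom{N_l}{2}$ as a sum over unordered pairs of distinct cycles both having length divisible by $l$. Writing $c_k$ for the number of cycles of length $k$, this gives
\[
\binom{N_l}{2} \;=\; \sum_{j\ge1,\; 2jl\le n} \binom{c_{jl}}{2} \;+\; \sum_{\substack{1\le j_1<j_2 \\ (j_1+j_2)l\le n}} c_{j_1 l}\, c_{j_2 l}.
\]
Now I would invoke the standard joint cycle-count formula for uniform random permutations of $[n]$: for distinct positive integers $k_1,\ldots,k_m$ with $a_1 k_1+\cdots+a_m k_m\le n$,
\[
\mathbb{E}\!\left[\prod_{i=1}^m \binom{c_{k_i}}{a_i}\right] \;=\; \prod_{i=1}^m \frac{1}{k_i^{a_i}\, a_i!}.
\]
In particular $\mathbb{E}[\binom{c_k}{2}] = 1/(2k^2)$ when $2k\le n$, and $\mathbb{E}[c_{k_1}c_{k_2}] = 1/(k_1 k_2)$ when $k_1\ne k_2$ and $k_1+k_2\le n$.

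Substituting and factoring out $1/l^2$,
\[
\mathbb{E}\!\left[\binom{N_l}{2}\right] \;=\; \frac{1}{l^2}\left(\,\frac{1}{2}\sum_{j\le n/(2l)} \frac{1}{j^2}\;+\;\sum_{\substack{1\le j_1<j_2 \\ j_1+j_2\le n/l}} \frac{1}{j_1 j_2}\right).
\]
The first inner sum is $O(1)$; the second is bounded by $\tfrac{1}{2}\bigl(\sum_{j\le n/l} 1/j\bigr)^2 = O(\log^2(n/l)) = O(\log^2 n)$. Hence $\mathbb{E}[\binom{N_l}{2}] = O(\log^2 n / l^2)$. Summing over $l > \log^3 n$,
\[
\sum_{l>\log^3 n} \mathbb{E}\!\left[\binom{N_l}{2}\right] \;=\; O\!\left(\log^2 n \sum_{l>\log^3 n} \frac{1}{l^2}\right) \;=\; O\!\left(\frac{\log^2 n}{\log^3 n}\right) \;=\; O\!\left(\frac{1}{\log n}\right),
\]
which tends to $0$ and concludes the proof.

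\medskip\noindent\textbf{Main obstacle.} There is no serious obstacle; the only point that requires care is verifying the joint moment identity for $c_{k_1},c_{k_2}$ (which is classical, following from Cauchy's formula for the cycle index), and checking that the admissibility conditions $2k\le n$ or $k_1+k_2\le n$ kick in exactly where they are needed so that no term has been overcounted.
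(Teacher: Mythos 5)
Your proposal is correct and follows essentially the same route as the paper: a union bound over $l>\log^3 n$, a bound of order $1/(k_1k_2)$ on the probability of two cycles of lengths $k_1,k_2$ (you derive it from the exact joint factorial-moment formula, the paper cites \cite{LP93}), giving $O(\log^2 n/l^2)$ per $l$ and then $O(1/\log n)$ after summing. No gaps; the extra care you take with the diagonal terms and the admissibility conditions is fine but not a departure from the paper's argument.
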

\begin{proof}For any $k_1$ and $k_2$, the probability that both are lengths of cycles is bounded above by $1/k_2k_2$ (one may do this easy calculation oneself, or may consult the beginning of the proof of \cite[Claim 1]{LP93}). Summing over $k_1$ and $k_2$ in $l\ZZ\cap[1,n]$ gives that the probability that there are two cycles lengths divisible by $l$ is bounded above by $C(\log n)^2/l^2$. Summing over $l\ge \log^3 n$ gives the lemma.
\end{proof}
%% The next lemma is well known, and we include its proof for completeness.
%% \begin{lemma}\label{lem:process}
%%   The lengths of the cycles of a random permutation on $n$ letters can be constructed by the following process. Choose a uniform element $x_1$ in $\{0,\dotsc,n-1\}$, and let $n-x_1$ be the first cycle length. Next choose a uniform element $x_2$ in $\{0,\dotsc,x_1-1\}$ and let $x_1-x_2$ be the second cycle. Continue until some $x_i$ is $0$.
%% \end{lemma}
%% \begin{proof}Choose some element of $\{1,\dotsc,n\}$, say 1, and examine its cycle. It is easy to see that the probability that the cycle of 1 has size $k$ is exactly $\frac 1n$, since there are $\binom{n-1}{k-1}$ ways to choose the cycle elements, $(k-1)!$ ways to choose the cycle from the chosen elements, and $(n-k)!$ ways to choose the remaining items. Take the cycle of 1 to be $n-x_1$. The remaining permutation is random on $x_1$ letters, so continue inductively.
%% \end{proof}
\begin{lemma}\label{lem:unique prime}
For any $0\le a<b\le 1$, with probability tending to 1, a random permutation has a cycle whose length, $l$, satisfies the following two requirements:
\begin{itemize}
\item $l\in[n^a,n^b]$.
\item $l$ has a prime factor $p$ such that $p>\log^3 n$.
\end{itemize}
\end{lemma}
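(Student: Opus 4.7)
The plan is a Chebyshev / second-moment argument applied to the number of cycles with the desired properties. Let $C_l$ denote the number of cycles of length $l$ in a uniform random $\pi\in S_n$, let
\[
G = \{l\in [n^a, n^b]\cap\ZZ : \text{some prime } p > \log^3 n \text{ divides } l\},
\]
and set $N = \sum_{l\in G} C_l$. I need to show $\PP(N \ge 1)\to 1$.

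I would begin by estimating $\mathbb{E}[N]$. From the classical identity $\mathbb{E}[C_l] = 1/l$ for every $1\le l\le n$, I get $\mathbb{E}[N] = \sum_{l\in G}1/l$. The full harmonic sum over $[n^a, n^b]$ is $(b-a)\log n + O(1)$, so it suffices to show that the $\log^3 n$-smooth integers $l\le n^b$ contribute at most $o(\log n)$. A Rankin-style bound with $\delta = 1/\log n$ would give
\[
\sum_{\substack{l\le n^b \\ l \text{ is }\log^3 n\text{-smooth}}}\frac{1}{l} \le n^{b\delta}\prod_{p\le \log^3 n}\bigl(1-p^{-1-\delta}\bigr)^{-1} \ll \prod_{p\le \log^3 n}\bigl(1-\tfrac{1}{p}\bigr)^{-1}\ll \log\log n,
\]
the last step being Mertens' theorem. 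Hence $\mathbb{E}[N] = (b-a)\log n + O(\log\log n) \to \infty$.

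Next I would bound $\mathrm{Var}(N)$. The factorial-moment identities for a uniform permutation give $\mathrm{Var}(C_l)\le 1/l$ for every $l\le n$ (with equality when $2l\le n$, and since $C_l\in\{0,1\}$ when $l>n/2$ one still has $\mathrm{Var}(C_l) = 1/l-1/l^2$). For distinct $l_1\ne l_2$ one has $\mathbb{E}[C_{l_1}C_{l_2}] = 1/(l_1 l_2)$ when $l_1 + l_2 \le n$, and $\mathbb{E}[C_{l_1}C_{l_2}] = 0$ when $l_1 + l_2 > n$ (two such cycles do not fit). In both cases $\mathrm{Cov}(C_{l_1}, C_{l_2}) \le 0$, so the off-diagonal terms only help:
\[
\mathrm{Var}(N) \le \sum_{l\in G}\mathrm{Var}(C_l) \le \sum_{l\in G}\frac{1}{l} = \mathbb{E}[N].
\]
Chebyshev now yields $\PP(N = 0)\le 1/\mathbb{E}[N]\to 0$, which is the desired conclusion.

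The only potentially delicate ingredient is the smooth-number estimate, but the Rankin/Mertens bound above is classical and suffices comfortably; every other ingredient is textbook material about cycle statistics of uniform permutations.
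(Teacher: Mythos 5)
Your argument is correct, but it takes a genuinely different route from the paper. You run a second-moment (Chebyshev) argument on $N=\sum_{l\in G}C_l$, using the classical factorial-moment identities for cycle counts ($\mathbb{E}[C_l]=1/l$, $\mathrm{Var}(C_l)\le 1/l$, and nonpositive covariances since $\mathbb{E}[C_{l_1}C_{l_2}]$ is $1/(l_1l_2)$ or $0$), and you dispose of the $\log^3 n$-smooth lengths by a Rankin/Mertens bound showing their reciprocal sum is $O(\log\log n)$ --- in fact even the Rankin factor is unnecessary, since the full Euler product $\prod_{p\le\log^3 n}(1-1/p)^{-1}\ll\log\log n$ already dominates the smooth reciprocal sum. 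The paper instead invokes Ford's ``cycles in sets'' theorem, which bounds the probability that no cycle length lies in a set $T$ by $\exp(-H(T)+1)$ with $H(T)=\sum_{k\in T}1/k$, and lower-bounds $H$ using the stronger smooth-number density estimate $n^{-b/3+o(1)}$ from Montgomery--Vaughan. The trade-off: the paper's route gives a polynomially small failure probability $n^{-c}$ at the cost of citing Ford's theorem and a genuine smooth-number count, whereas your route is more elementary and self-contained (only first and second moments of cycle counts plus Mertens) but yields only an $O(1/\log n)$ failure probability; since the lemma is used solely as a ``probability tending to $1$'' statement in the proof of Lemma \ref{lem:primitive}, the weaker rate is entirely sufficient.
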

\begin{proof}
By \cite[Theorem 2.9, ``Cycles in sets theorem'']{F}, for any set $T\subset\{1,\dotsc,n\}$, the probability that none of the cycles is in $T$ is bounded by $\exp(-H+1)$ where $H=H(T):=\sum_{k\in T}\frac 1k$ (for easier comparison to \cite{F}, we take the $r$ there to be 1, and the $k_1$ there to be 0). So we need only calculate $H$.

The probability that a random number $k$, uniform between 1 and $n^b$, has all its prime divisors smaller than $\log^3 n$ has a well-known estimate: it is equal to $n^{-b/3+o(1)}$, (see \cite[Equation~7.16, page 203]{DM}, and the definition of their $\psi$ on the top of page 202). Hence we may estimate
\[
H\ge \sum _{\mathclap{k=n^a+n^{2b/3+o(1)}}}^{n^b} 1/k=(b-\max\{a,\tfrac 23b\}+o(1))\log n.
\]
As the constant is positive, the lemma is proved.
\end{proof}
\subsection{Polynomials versus Permutations}\label{sec:p vs p}
In this section we discuss the fact that the cycle structure of a random permutation is similar to the decomposition of a random polynomial to irreducible factors. In a way it goes back to Gauss (who showed that the probability that the random polynomial is irreducible is close to $\frac 1n$, which is the probability that the permutation has only one cycle), and was developed in the literature significantly, say in \cite{ABT}. Still, we need a few lemmas which we did not find in the literature.

Consider the space $\Omega$ of all tuples $(m_1,m_2,\ldots)$ of nonnegative integers with finite support; i.e., $m_i\geq 0$ for all $i$ and $m_i=0$ for all sufficiently large $i$. 
We define two sequences of random variables on  $\Omega$. First, for $n\geq 1$, let $f$ be a random monic polynomial of degree $n$ in $\FF_q[T]$; i.e., the $0,\dotsc,n-1$ coefficients of $f$ are i.i.d.\ uniform in $\FF_q$ and the $n^\textrm{th}$ coefficient is 1; and let
$X_n(m_1,m_2,\ldots )$ be the probability that $f$ has $m_i$ prime factors of degree $i$ in its prime factorization. In particular, $\sum_i i m_i=n$ if $X_n(m_1,m_2,\ldots )>0$. Similarly, let $Y_{n}(m_1,m_2,\ldots )$ be the probability that a random permutation on $n$ letters has $m_i$ cycles of length $i$ in its decomposition to a product of disjoint cycles. Again, $\sum_i im_i=n$ if $Y_n(m_1,m_2,\ldots)>0$.  
If $c_{i,m}$ denotes the number of possibilities to choose $m$ unordered monic irreducible polynomials of degree $i$ and if 
\[
\alpha(i,m) = \frac{c_{i,m}}{q^{im}},
\]
then we have the formulas
\begin{equation}\label{eq:X,Ydef}
\begin{split}
Y_n(m_1,m_2,\ldots) &= \prod_{i} \frac{1}{m_i! i^{m_i}}, \qquad\text{and}\\
X_n(m_1,m_2,\ldots) &= \prod_{i} \alpha(i,m_i).
\end{split}
\end{equation}
We denote by $\PP_{X_n}$ and $\PP_{Y_n}$ the probabilities that $X_n$ respectively $Y_n$ induce on $\Omega$.

For $m=1$ we have the exact formula $i\alpha(i,1) = \sum_{j\mid i} \mu(i/j) q^{j-i}$, with $\mu$ the M\"obius function, which implies that 
\begin{equation}\label{eq:PPT}
-\frac{2q^{-i/2}}{i}\leq \alpha(i,1)-\frac{1}{i}\leq 0,
\end{equation}
see, e.g., \cite[Lemma~4]{Pollack}. We can use \eqref{eq:PPT} to get
\begin{equation}\label{eq:Estalpha(im)}
\alpha(i,m) =\frac{1}{m!i^m}\exp\big(O(mq^{-i/2}+m^2 i q^{-i})\big).
\end{equation}
Indeed, as the number of ways to choose $m$ unordered objects out of $x$ objects with repetition is $\binom{m+x-1}{m}$, one has
\[
\alpha(i,m) %=\frac{1}{q^{im}} \binom{q^i\alpha(i,1)+m-1}{m} 
= \frac{\big(\alpha(i,1) + q^{-i}(m-1)\big)\big(\alpha(i,1) + q^{-i}(m-2)\big)\cdots \alpha(i,1)}{m!}
\]
So plugging \eqref{eq:PPT} to this equation we get 
\begin{align*}
\alpha(i,m) 
	& = \frac{1}{m! i^m} \prod_{j=1}^m \left(1 +O(q^{-i/2} + q^{-i}(m-j)i)\right) \\
	& = \frac{1}{m! i^m} \exp\big( O(mq^{-i/2} + m^2 i q^{-i}) \big),
\end{align*}
proving \eqref{eq:Estalpha(im)}.
%			&= \frac{(1/i +O(q^{-i/2}/i+m^2q^{-i}))^m}{m!} = \frac{1}{m!i^m}  + O(m(q^{-i/2}+m^{2}q^{-i})).
%\end{align*}

Normally, we will use this with $m$ small relative to $q^{i/2}$. For example, if we assume that $m\leq q^{i/4}$, and in general $i\ll q^{i/4}$, so  \eqref{eq:Estalpha(im)} gives 
\begin{equation}\label{eq:Estalpha(im)smallm}
\alpha(i,m) = \frac{1}{m!i^m} (1 + O(q^{-i/4})) .
\end{equation}
Two useful equalities, which hold for all $x\le n$ are:
\begin{align}
\sum_{\substack{(m_1,\dotsc,m_n)\\\sum im_i=x}}
\prod_{i=1}^n\frac{1}{m_i!i^{m_i}}&= 1\label{eq:onex}\\
\sum_{\substack{(m_1,\dotsc,m_n)\\n\ge \sum im_i\ge x}}
\prod_{i=1}^n\frac{1}{m_i!i^{m_i}}&= n-x+1\label{eq:manyx}
\end{align}
where (\ref{eq:onex}) comes from noting that the terms summed over are exactly the ones which correspond to $Y_x$, so they are probabilities and sum to 1, and (\ref{eq:manyx}) is simply the sum of (\ref{eq:onex}) from $x$ to $n$.
We will also need auxiliary lemmas, that allow us to reduce to the case with $m_i=0$ for small $i$:
\begin{lemma}\label{lem:miexponentially}
For all $n>1$, $i\in \{1,\ldots, n\}$ and $\lambda\geq 0$, 
\begin{equation}\label{fixingm_i}
\PP_{X_n}(m_i= \lambda) \leq e^{-c\lambda},
\end{equation}
where $c>0$ is a positive constant. In particular, 
\begin{equation}\label{fixingm_i1}
\PP_{X_n}(m_i\geq \lambda) \ll e^{-c\lambda}.
\end{equation}
Similar estimates hold also for $Y_n$.
\end{lemma}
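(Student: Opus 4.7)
My plan is to use the factorization identity
\[
\PP_{X_n}(m_i = \lambda) = \alpha(i, \lambda) \cdot \PP_{X_{n - i\lambda}}(m_i = 0),
\]
which is obtained by partitioning monic polynomials of degree $n$ according to the multiset of their degree-$i$ irreducible factors: choosing the $\lambda$ irreducibles of degree $i$ contributes the factor $\alpha(i, \lambda) = c_{i, \lambda}/q^{i\lambda}$, and the complementary polynomial of degree $n - i\lambda$ must have no degree-$i$ irreducible in its factorization. Bounding the second factor trivially by $1$ yields $\PP_{X_n}(m_i = \lambda) \le \alpha(i, \lambda)$, reducing the problem to an exponential upper bound on $\alpha(i,\lambda)$.

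For $\lambda \ge 1$, I would analyze the ratio
\[
\frac{\alpha(i, \lambda + 1)}{\alpha(i, \lambda)} = \frac{c_{i, 1} + \lambda}{(\lambda + 1) q^i} \le \frac{1}{(\lambda + 1)i} + \frac{\lambda}{(\lambda + 1) q^i},
\]
using the trivial count $c_{i, 1} \le q^i/i$. A direct case analysis (splitting $i = 1$ from $i \ge 2$) shows the right-hand side is bounded by some universal $\kappa < 1$ for all $\lambda \ge 1$, $i \ge 1$, $q \ge 2$, with the worst case being $i = 1$, $q = 2$, $\lambda = 1$, where the ratio equals $3/4$. Iterating gives $\alpha(i, \lambda) \le \kappa^{\lambda - 1}$.

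The main obstacle is the endpoint $i = \lambda = 1$, where $\alpha(1, 1) = q/q = 1$ makes the naive bound $\PP\le\alpha(i,\lambda)$ vacuous. I would resolve this by keeping the second factor of the identity nontrivially: for $m \ge i$, a fixed degree-$i$ monic irreducible divides a uniformly random monic polynomial of degree $m$ with probability exactly $q^{-i}$, so $\PP_{X_m}(m_i = 0) \le 1 - q^{-i} \le 1/2$, and hence $\PP_{X_n}(m_1 = 1) \le 1/2$ whenever $n \ge 2$. Combining this pointwise estimate at $\lambda = 1$ with $\alpha(i, \lambda) \le \kappa^{\lambda - 1}$ (for $\lambda \ge 2$) and the trivial $\PP \le 1$ (at $\lambda = 0$), and then choosing $c > 0$ small enough that $e^{-c\lambda}$ dominates all three on their respective ranges, yields \eqref{fixingm_i}. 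The tail estimate \eqref{fixingm_i1} then follows by summing a geometric series. The argument for $Y_n$ is entirely analogous and in fact simpler, with $1/(\lambda! i^\lambda)$ playing the role of $\alpha(i, \lambda)$ and with no boundary issue since this expression is already strictly less than $1$ for all $(i,\lambda)\neq(1,0)$.
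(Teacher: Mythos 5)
Your overall route is the same as the paper's: the identity $\PP_{X_n}(m_i=\lambda)=\alpha(i,\lambda)\,\PP_{X_{n-i\lambda}}(m_i=0)\le\alpha(i,\lambda)$, geometric decay of $\alpha(i,\lambda)$ in $\lambda$ (your ratio computation is a clean variant of the paper's bounds $\alpha(i,\lambda)\le\alpha(i,1)^\lambda$ for $i\ge2$ and $\alpha(1,2\lambda)\le\alpha(1,2)^\lambda\le(3/4)^\lambda$), and a separate treatment of the single bad point $(i,\lambda)=(1,1)$. The gap is exactly at that bad point, the one you yourself call the main obstacle. The chain you write, $\PP_{X_m}(m_1=0)\le 1-q^{-i}\le\tfrac12$, is false for every $q\ge3$: comparing with a \emph{single} fixed linear irreducible only yields $1-1/q$, which tends to $1$ as $q$ grows, so your argument does not bound $\PP_{X_n}(m_1=1)$ away from $1$ uniformly in $q$, and hence does not produce a constant $c>0$ independent of $q$ --- which is what the lemma asserts and what the paper's proof arranges (it explicitly notes it needs an estimate uniform in $n\ge2$ and $q$). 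If one were willing to let $c$ depend on $q$, the bound $1-1/q<1$ would still suffice for the application in Theorem \ref{thm:irred}, since $L$ is fixed, but that is weaker than the statement proved, and in any case the inequality ``$\le\tfrac12$'' as written is simply wrong.

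The repair is to use all $q$ monic linear polynomials rather than one. For instance, within your factorized form, two-term Bonferroni gives for $m\ge2$ that $\PP_{X_m}(m_1\ge1)\ge q\cdot q^{-1}-\binom q2 q^{-2}=\tfrac12+\tfrac1{2q}$, hence $\PP_{X_m}(m_1=0)\le\tfrac12-\tfrac1{2q}$ (and $\PP_{X_1}(m_1=0)=0$); this is the same inclusion--exclusion the paper performs, in the form $\PP(m_1=1)\le\PP(\exists\ p\ \mathrm{linear},\ p\mid f)\le q\cdot q^{-1}-\binom q2q^{-2}+\binom q3 q^{-3}=\tfrac23+\tfrac1{3q^2}\le\tfrac34$. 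A second, smaller slip: your remark that the $Y_n$ case has ``no boundary issue'' is not right, since $1/(\lambda!\,i^\lambda)=1$ at $(i,\lambda)=(1,1)$ as well; one still needs the direct observation that the probability that a permutation of $n\ge2$ letters has exactly one fixed point equals $D_{n-1}/(n-1)!\le\tfrac12$, where $D_m$ denotes the number of derangements. With these two points fixed, your proof closes and is essentially the paper's.
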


\begin{proof}
By \eqref{eq:X,Ydef},
\[
\PP_{X_n}(m_i=\lambda) = \alpha(i,\lambda)
\sum_{\substack{m_i=0\\ \sum_{j\neq i} j m_j = n-\lambda} }\prod_{j\neq i} \alpha(j,m_j).
\]
The sum on the right hand side is smaller than the same sum without the restriction $m_i= 0$, which is simply $1$ (compare to (\ref{eq:onex})), so 
\begin{equation}\label{eq:malpha}
\PP_X(m_i=\lambda) \leq \alpha(i,\lambda).
\end{equation}
Thus it suffices to show that $\alpha(i,\lambda)\leq e^{-c\lambda}$. A similar argument shows that $\PP_{Y_n}(m_i=\lambda)\le 1/\lambda!i^\lambda$ which finishes the $Y_n$ case and we will not return to it.

For $i>1$, \eqref{eq:PPT} gives
\[
\alpha(i,1) \leq \frac{1}{i}\leq \frac12. 
\]
Since $\alpha(i,\lambda)\leq \alpha(i,1)^\lambda$, we get the needed bound
\[
\alpha(i,\lambda) \leq \frac{1}{2^{\lambda}}= e^{-\lambda\log 2}.
\]

For $i=1$ and $\lambda=2$ there are $\binom{q}{2}+q$ ways to choose  two linear polynomials, hence, as $q\geq 2$, 
\[
\alpha(1,2) = \frac{\binom{q}{2}+q }{q^2} = \frac{1}{2}(1+1/q) \leq \frac{3}{4}.
\]
This also does the case $i=1$ and $\lambda>2$ since
\[
\alpha(1,2\lambda+1)\leq \alpha(1,2\lambda) \leq \alpha(1,2)^{\lambda}\leq e^{-\lambda(\log4-\log 3)}.
\]
The last remaining case is $i=\lambda=1$, for which we forgo (\ref{eq:malpha}) and estimate $\PP(m_1=1)$ directly (we just need an estimate uniform in $n\ge 2$ and $q$). For any linear polynomial $p$ we have
\[
\PP(p\;|\;f)=q^{-1}
\]
since there are exactly $q^{n-1}$ monic polynomials of degree $n-1$, each one may be multiplied by $p$ to get a monic polynomial of degree $n$, and these are all different. Similarly, if $p_1$, $p_2$ and $p_3$ are linear polynomials we have
\[
\PP(p_1p_2\;|\;f)=q^{-2}\qquad \mbox{and}\qquad\PP(p_1p_2p_3\;|\;f)\le q^{-3},
\]
where the inequality in the second case is simply because we only assumed $n\ge 2$ and if $n=2$ this probability is 0. Using inclusion-exclusion gives %***I found this on the internet as ``Bonferroni inequalities'' but for me this is just the inclusion-exclusion principle, so I suggest we don't use the name Bonferroni***
\begin{equation*}
  \PP(\exists p\mbox{ linear such that }p\;|\;f)
  \le q\cdot q^{-1}-\tbinom{q}{2}q^{-2}+\tbinom{q}{3}q^{-3}=\frac{2}{3}+\frac{1}{3q^2}\le \frac{3}{4}.
\end{equation*}
This was the last remaining case so the proof of \eqref{fixingm_i} is done.

Summing over all integers $\geq \lambda$ gives
\[
\PP_{X_n}(m_i\geq \lambda) = \sum_{\mu=\lambda}^{\infty} \PP_{X_n}(m_i= \mu) \leq \sum_{\mu=\lambda}^{\infty} e^{-c\mu} =e^{-c\lambda} \frac{1}{1-e^{-c}},
\]
which proves  \eqref{fixingm_i1}.
\end{proof}

In the following lemma we write $d_{TV}(A,B)$ to denote the total variation distance between $A$ and $B$.

\begin{lemma}\label{lem:ABT}Let $X_{n,r}$ be the measure on vectors $(m_r,m_{r+1},\dotsc)$ given by restricting $X_n$, i.e.\ $X_{n,r}(m)$ is the probability that a random polynomial of degree $n$ has $m_r$ factors of degree $r$, $m_{r+1}$ factors of degree $r+1$ etc. Let $Y_{n,r}$ be the analogous quantity for $Y_n$, the measure on cycles of random permutations. Then
  \[
  d_{TV}(X_{n,r},Y_{n,r})\le C/r.
  \]
\end{lemma}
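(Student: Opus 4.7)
The plan is to use the explicit product formulas \eqref{eq:X,Ydef} together with the estimate \eqref{eq:Estalpha(im)smallm} to compare $X_{n,r}$ and $Y_{n,r}$ via the chain rule for total variation, conditioning on the ``long-part sum'' $L(\omega) := \sum_{i \ge r} i m_i$.

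First, by marginalizing out the short indices $m_1, \ldots, m_{r-1}$ and using \eqref{eq:onex} (restricted to tuples with parts $< r$), one obtains
\[
Y_{n,r}(\omega) = U_{n - L(\omega)} \prod_{i \ge r} \tfrac{1}{m_i!\, i^{m_i}}, \qquad X_{n,r}(\omega) = T_{n - L(\omega)} \prod_{i \ge r} \alpha(i, m_i),
\]
where $U_y$ (respectively $T_y$) is the probability that a random permutation of $y$ letters (respectively, a uniform random monic polynomial of degree $y$ in $\FF_q[T]$) has no cycle (resp.\ no irreducible factor) of size $\ge r$. By Lemma \ref{lem:miexponentially}, up to an exponentially small event both measures are concentrated on the ``tame'' event that $m_i \le q^{i/4}$ for every $i \ge r$; on tame $\omega$, \eqref{eq:Estalpha(im)smallm} yields
\[
\prod_{i \ge r} \alpha(i, m_i) = \Bigl(\prod_{i \ge r} \tfrac{1}{m_i!\, i^{m_i}}\Bigr)\bigl(1 + O(q^{-r/4})\bigr).
\]

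Next, by the chain rule for total variation, $d_{TV}(X_{n,r}, Y_{n,r})$ is bounded by the TV distance between the marginal laws of $L$ under the two models, plus the $L$-averaged conditional TV distance. The conditional distributions given $L = k$ are both proportional to $\prod_{i \ge r} \tfrac{1}{m_i!\, i^{m_i}}$ (up to a common $1 + O(q^{-r/4})$ factor on tame tuples), so the conditional contribution is $O(q^{-r/4})$. Hence it suffices to show that the two one-dimensional distributions of $L \in \{0, r, r+1, \ldots, n\}$ are within TV distance $O(1/r)$.

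This one-dimensional comparison is the main obstacle. My plan is to follow the Arratia--Barbour--Tavar\'e paradigm \cite{ABT}: realize both cycle/factor count vectors as the conditional law of an independent sequence $(Z_i)_i$ given $\sum_i i Z_i = n$, where for $i \ge r$ the unconditioned marginals in the two models are both close to a Poisson$(1/i)$ by \eqref{eq:PPT} (with relative error $O(q^{-i/2})$). The unconditioned laws of $L = \sum_{i \ge r} i Z_i$ are therefore already within TV distance $O(q^{-r/2})$, and the entire $O(1/r)$ loss comes from the conditioning on the global sum. That conditioning error is controlled by a local limit theorem for the short-index contribution $\sum_{i<r} i Z_i$, whose distribution has mass spread on scale $\asymp r$; this is the technically delicate step, carried out essentially as in \cite{ABT}.
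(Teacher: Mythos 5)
The paper does not prove this lemma from scratch: its proof is a one-line citation of Theorem~5.8 of \cite{ABT}, together with a dictionary for the notation. So the real comparison is between your outline and the argument inside \cite{ABT}. Your preliminary reductions are correct: the marginalization giving $Y_{n,r}(\omega)=U_{n-L}\prod_{i\ge r}\frac{1}{m_i!\,i^{m_i}}$ and $X_{n,r}(\omega)=T_{n-L}\prod_{i\ge r}\alpha(i,m_i)$ is right, the tame-event truncation via Lemma~\ref{lem:miexponentially} combined with \eqref{eq:Estalpha(im)smallm} does show that the conditional laws given $L$ agree up to $O(q^{-r/4})$ (after routine bookkeeping to transfer the truncation through the conditioning, and noting one may assume $r$ large since the claim is trivial for bounded $r$), and the chain-rule bound for $d_{TV}$ is standard. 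This correctly isolates the problem as a comparison of the two one-dimensional laws of $L$.

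However, that one-dimensional comparison is the entire content of the lemma, and your treatment of it is an appeal to ``a local limit theorem \dots carried out essentially as in \cite{ABT}'' --- that is, essentially the result the paper cites. The claim that the conditioning costs only $O(1/r)$ does not follow from the heuristic ``mass spread on scale $r$'': writing the conditional law of $L$ as the unconditioned law reweighted by $k\mapsto \PP(S_{<r}=n-k)/\PP(S=n)$ with $S_{<r}=\sum_{i<r}iZ_i$, one must compare these reweighting profiles between the two models, and for small $q$ the small-index marginals differ at order one (for $q=2$ the law of the number of linear factors is nowhere near Poisson$(1/i)$, and the mean of $S_{<r}$ differs by a constant between the models). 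One therefore needs genuine local-limit estimates for $S_{<r}$ in both models, uniform over the whole relevant range of $n-k$ (including the regime where $r$ is comparable to $n$), to see that these order-one discrepancies wash out to $O(1/r)$ after normalization; that is precisely the technical work of \cite{ABT}. So your proposal is a sound road map which reconstructs the internal structure of the ABT argument, but as written it establishes the lemma only modulo a statement of essentially the same depth as the cited Theorem~5.8; if you invoke \cite{ABT} at that point, you may as well cite their theorem directly, as the paper does.
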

\begin{proof}This is Theorem~5.8 in \cite{ABT} --- to aid the reader in understanding the notation of \cite{ABT}, their $Y_j$ is our $m_j$ for permutations, their $C_j$ is our $m_j$ for polynomials (both are defined on page 349 of \cite{ABT}) and their notation $\mathcal{L}$ is the standard notation for ``the law of a random variable''. Let us note that $C/r$ is suboptimal --- one may show an exponential decay in $r$ --- but we will not need this extra precision.
  \end{proof}

\section{4 independent permutations}\label{sec:proof}
\begin{lemma}\label{lem:Konyagin}
There exists an $\omega:\NN\to\NN$ with $\lim_{n\to\infty}\omega(n)=\infty$ such that 
\[
\lim_{n\to\infty}\PP(f\textrm{ has a divisor of degree }\le \omega(n))=0
\]
where $f$ is as in Theorem~\ref{thm:irred}.
\end{lemma}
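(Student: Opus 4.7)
The plan is to exploit that $L$ has at least $4$ distinct prime divisors $p_1,\ldots,p_4$. Since each coefficient $\zeta_i$ is uniform on $\{1,\ldots,L\}$ and $p_j\mid L$, each reduction $\bar f_j:=f\bmod p_j\in\FF_{p_j}[X]$ is a uniform monic polynomial of degree $n$, and by the Chinese Remainder Theorem the four $\bar f_j$'s are jointly independent. Any monic divisor $g\in\Z[X]$ of $f$ of degree $d$ reduces to a monic divisor $\bar g_j$ of $\bar f_j$ of the same degree, so
\[
\PP(f \text{ has a divisor of degree } d) \le \prod_{j=1}^{4}\PP(\bar f_j \text{ has a divisor of degree } d).
\]
By Lemma~\ref{lem:ABT}, up to a total variation error $O(1/r)$, each factor on the right can be compared to the probability that a uniform random permutation $\sigma_j$ on $[n]$ has an invariant subset of size $d$, i.e.\ a union of cycles whose lengths sum to $d$.

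Next, apply Lemmas~\ref{lem:no double divisor}, \ref{lem:unique prime}, and \ref{lem:miexponentially} to each $\sigma_j$. Fix $0<a<b<1$. With probability $1-o(1)$, each $\sigma_j$ has a cycle of length $l_j\in[n^a,n^b]$ whose length carries a prime factor $\pi_j>\log^3 n$, and $\pi_j$ divides no other cycle length of $\sigma_j$. For $d<n^a$ any invariant subset of $\sigma_j$ of size $d$ must avoid this distinguished cycle, so $d$ is a subset sum from the remaining cycle lengths. Lemma~\ref{lem:miexponentially} controls the abundance of short cycles, and combining this with four-fold independence is designed to yield a bound tending to zero.

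Summing the resulting bound over $1\le d\le d_0$ for each fixed $d_0$ gives $\PP(f \text{ has a divisor of degree } \le d_0)\to 0$ as $n\to\infty$. A standard diagonal argument --- for instance, take $\omega(n)$ to be the largest integer $k$ for which this partial sum (with $d_0=k$) is at most $1/k$ --- then produces $\omega:\NN\to\NN$ with $\omega(n)\to\infty$ and $\PP(f \text{ has a divisor of degree } \le \omega(n))\to 0$, as required.

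The principal obstacle is the quantitative bound on invariant subsets of small size. A direct calculation shows $\E[T_d]=1$ and $\mathrm{Var}(T_d)=d$ where $T_d$ counts invariant subsets of size $d$; hence $\PP(T_d\ge 1)\ge 1/(d+1)$. Even granting the optimistic upper estimate $\PP(T_d\ge 1)\lesssim 1/d$ from a single $d$-cycle, the naive four-fold product is at best $O(d^{-4})$, whose sum over $d$ is $O(1)$, not $o(1)$. The critical saving must therefore come from the ``unique rare prime'' structure provided by Lemmas~\ref{lem:no double divisor} and \ref{lem:unique prime}: it couples the four otherwise independent permutations through the single integer $d$ and forces $d$ to be a subset sum in each $\sigma_j$ that \emph{avoids} the distinguished cycle, which is what has to supply the extra decay that closes the estimate.
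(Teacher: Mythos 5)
Your approach cannot prove this lemma, and the obstruction is not merely the quantitative step you flag at the end --- it is structural. The whole point of Lemma~\ref{lem:Konyagin} in the paper's architecture is to dispose of divisors of \emph{bounded} (or very slowly growing) degree, precisely the range where the mod-$p$/permutation machinery gives nothing. For a fixed small $d$, the probability that a uniform monic polynomial over $\FF_p$ has a factor of degree $d$ is bounded below by a positive constant independent of $n$ (for $d=1$ it tends to roughly $1-e^{-1}$), so your four-fold product over the primes dividing $L$ is a constant bounded away from $0$, not something tending to $0$; no conditioning on a long cycle carrying a rare prime (Lemmas~\ref{lem:no double divisor} and \ref{lem:unique prime}) changes this, because small cycles --- fixed points, $2$-cycles, etc.\ --- remain abundant and are unaffected by the distinguished long cycle. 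In the same vein, the comparison of Lemma~\ref{lem:ABT} has total variation error $C/r$, which is useless when you must track factors of degree $r$ with $r$ small (for degree-$1$ factors you would need $r=1$). Also, your ``optimistic'' estimate $\PP(T_d\ge 1)\lesssim 1/d$ is false: by Eberhard--Ford--Green the probability that a random permutation fixes a $d$-set is of order $d^{-\delta+o(1)}$ with $\delta=1-\frac{1+\log\log 2}{\log 2}\approx 0.086$, so even for growing $d$ the naive four-fold product is $d^{-4\delta+o(1)}$ with $4\delta<1$, which is exactly why the paper's Lemma~\ref{lem:PPR} must treat a whole dyadic block $[k,2k]$ at once and still only becomes useful once $k\ge\omega(n)$. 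In short, you are trying to prove the small-degree lemma with the tools whose failure at small degrees is the very reason the lemma is needed.

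The correct route, and the one the paper takes (by citation), uses the integer coefficients directly rather than their reductions: a monic factor of $f$ of small degree has all its roots of bounded modulus (since the coefficients lie in $\{1,\dots,L\}$), so one bounds the probability that $f$ vanishes at, or has a factor among, the few candidate algebraic numbers of small degree and bounded house, via anticoncentration/counting arguments. This is Konyagin's argument (giving $\omega(n)=n/\log n$), or the simpler ones of O'Rourke--Wood ($\omega(n)=\sqrt{\log n}$) and Kozma--Zeitouni (no explicit rate), all of which carry over to coefficients uniform in $\{1,\dots,L\}$. Note that, as the paper remarks, this lemma holds for any $L\ge 2$ and does not use the four-prime hypothesis at all --- a further sign that reducing modulo the prime divisors of $L$ is not the mechanism at work here.
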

(this lemma does not require $L$ to be divisible by 4 distinct primes)
\begin{proof}This is well known and has many proofs in the literature. By far the best $\omega$ was achieved by Konyagin \cite{K99} who showed this with $\omega(n)=n/\log n$. The statement in \cite{K99} is only for coefficients 0 and 1, but the proof carries through in our case. A simpler argument that gives only $\omega(n)=\sqrt{\log n}$ can be found in \cite[Theorem~1.10 and \S2.2]{ORW}. An even simpler argument, with no explicit bound on $\omega$,  can be found in \cite[\S 2]{KZ13}. What we tell you three times is true.
\end{proof}
\begin{lemma}\label{lem:PPR}
Let $\sigma_{1},\sigma_{2},\sigma_{3},\sigma_{4}$ be 4 independent
uniform permutations in $S_{n}$. For $i\in \{1,\dotsc,4\}$ and $l\le n$
we define $E(i,l)$ to be the event that $l$ can be written as a
sum of lengths of cycles of $\sigma_{i}$. Then for all $k<n$, 
\[
\mathbb{P}\left(\bigcup_{l=k}^{2k}\bigcap_{i=1}^{4}E(i,l)\right)\le Ck^{-c}
\]
where both constants are absolute, in particular independent of $n$
and $k$. 

Further, for an additional parameter $\lambda$,
\begin{equation}\label{eq:with lambda}
\mathbb{P}\bigg(\bigcup_{l=k}^{2k}\bigcup_{\lambda_{1}=0}^{\lambda}\dotsb\bigcup_{\lambda_{4}=0}^{\lambda}\bigcap_{i=1}^{4}E(i,l-\lambda_{i})\bigg)\le C(\lambda+1)^{4}k^{-c}
\end{equation}
\end{lemma}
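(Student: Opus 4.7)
Plan of proof. The strategy is a first-moment argument combined with the structural lemmas on random permutations. By independence of $\sigma_1,\ldots,\sigma_4$ and the union bound, writing $p_l:=\mathbb{P}(E(1,l))$ (which is the same quantity for every $i$),
\[
\mathbb{P}\biggl(\bigcup_{l=k}^{2k}\bigcap_{i=1}^4 E(i,l)\biggr)\le\sum_{l=k}^{2k}p_l^4.
\]
So the basic estimate reduces to showing $\sum_{l=k}^{2k} p_l^4\le Ck^{-c}$. This in turn would follow from a pointwise bound of the shape $p_l\le Cl^{-\alpha}$ for $l\in[k,2k]$ with some $\alpha>\tfrac14$, since then $\sum p_l^4\le Ck\cdot k^{-4\alpha}=Ck^{-(4\alpha-1)}$. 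For $k$ bounded by an absolute constant the inequality is trivial by enlarging $C$.

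To produce the pointwise bound on $p_l$ I would use Lemmas \ref{lem:no double divisor} and \ref{lem:unique prime}. Applied with $[n^a,n^b]$ chosen to straddle the scale $k$, they guarantee (with probability $1-o(1)$) a cycle of $\sigma$ of some length $L$ with a prime divisor $p>\log^3 n$, and that $p$ divides no other cycle length. Heuristically this is consistent with the expected behaviour: a random permutation has $\Theta(\log l)$ cycles of length $\le l$, producing $O(l)$ subset sums spread more-or-less uniformly over $[0,l]$, so that $p_l$ should be of order at most $(\log l)^{O(1)}/l$, comfortably below the $l^{-1/4}$ threshold needed. The congruence information provided by the isolated prime $p$ is the main tool for converting this heuristic into a rigorous bound: since $L\equiv 0\pmod p$ while all other cycle lengths are coprime to $p$, on the event $l\in A(\sigma):=\{\text{subset sums of cycle lengths of }\sigma\}$ the residue $l\bmod p$ must equal a subset sum, modulo $p$, of the residues of the non-special cycle lengths, a restriction that can be leveraged against the randomness of the cycle lengths.

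The extension with parameter $\lambda$ in \eqref{eq:with lambda} follows from the basic version by a power-mean argument. The event with slack rewrites as $\bigcup_l\bigcap_i\{A_i\cap[l-\lambda,l]\neq\emptyset\}$; setting $q_l:=\mathbb{P}(A\cap[l-\lambda,l]\neq\emptyset)\le\sum_{j=0}^{\lambda}p_{l-j}$ and applying Cauchy--Schwarz twice, $q_l^4\le(\lambda+1)^3\sum_{j=0}^{\lambda}p_{l-j}^4$. Summing over $l$ and noting that each $p_{l'}^4$ appears in at most $\lambda+1$ of the intervals yields $\sum_l q_l^4\le(\lambda+1)^4\sum_{l'}p_{l'}^4\le C(\lambda+1)^4k^{-c}$, as required.

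The main obstacle is the pointwise bound on $p_l$. The structural lemmas provide one congruence constraint per permutation modulo a large random prime, but translating this into a genuine polynomial-in-$l$ saving requires a careful additive-combinatorial analysis of the subset-sum map on a random sequence of residues modulo a large prime — in effect a local-limit-type statement for subset sums of cycle lengths of a random permutation. This step, together with handling the small-$k$ regime and verifying that the $o(1)$ losses in the conditioning do not spoil the polynomial bound, is the technical heart of the argument.
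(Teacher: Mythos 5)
Your proposal has a genuine gap, and it is exactly at the point you flag as the ``technical heart'': the pointwise bound $p_l\le Cl^{-\alpha}$ with $\alpha>\tfrac14$ that your first-moment reduction needs is false. The probability that $l$ is a subset sum of cycle lengths of a uniform permutation (equivalently, that the permutation fixes some set of size $l$) is of order $l^{-\delta+o(1)}$ with $\delta=1-\frac{1+\log\log 2}{\log 2}\approx 0.086$, by Eberhard--Ford--Green \cite{EFG0} (the very estimate this paper invokes in Lemma \ref{lem:imprimitive}); in particular $p_l$ is nowhere near $(\log l)^{O(1)}/l$, and $\sum_{l=k}^{2k}p_l^4\asymp k^{1-4\delta}\to\infty$, so the bound $\mathbb{P}(\bigcup_l\bigcap_i E(i,l))\le\sum_l p_l^4$ gives nothing. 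The reason your heuristic fails is that the dominant contribution to $p_l$ comes from rare permutations with unusually many small cycles; even a typical permutation has about $\log l$ cycles of length $\le l$ and hence about $l^{\log 2}$ subset sums below $l$, so the best one can hope for conditionally is $l^{\log 2-1}\approx l^{-0.31}$, not $l^{-1+o(1)}$. The congruence idea via Lemmas \ref{lem:no double divisor} and \ref{lem:unique prime} is not the relevant tool here (in the paper those lemmas serve the Galois-group argument, Lemma \ref{lem:primitive}), and no amount of conditioning on an isolated prime divisor will push $p_l$ below $l^{-1/4}$, because $p_l$ itself is $\approx l^{-0.086}$.

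The paper's proof (following \cite{PPR}, with \cite{EFG} as the reference for the two inputs) repairs precisely this: introduce the bad event $B(i,k,\eps)$ that $\sigma_i$ has at least $(1+\eps)\log k$ cycles of length less than $k$, with $\PP(B(i,k,\eps))\le C_1(\eps)k^{-\eps^2/3}$, and use $\PP(E(i,l)\setminus B(i,l,\eps))\le C_2(\eps)l^{\log 2-1+2\eps}$. The crucial structural point, which your per-$l$ union bound cannot reproduce, is that the bad event does not depend on $l$ (nor on the $\lambda_i$) and is removed \emph{once}, by a union bound over the four permutations only, outside the union over $l$; only the conditional probabilities are raised to the fourth power and summed over the $\sim k(\lambda+1)^4$ choices of $(l,\lambda_1,\dotsc,\lambda_4)$. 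Since $4(\log 2-1)<-1$, one may choose $\eps$ small so that $4(\log 2-1+2\eps)<-1$, and both terms are $\le C(\lambda+1)^4k^{-c}$. If instead one excluded the bad event inside the sum over $l$, the term $k\cdot k^{-\eps^2/3}$ would blow up, which is why the order of operations matters and why your reduction to a pointwise bound on $p_l$ is not merely unproved but unprovable. (Your Cauchy--Schwarz treatment of the $\lambda$-slack is fine in itself, but it inherits the same false premise; the paper handles the slack by the same split, summing the conditional bounds over the $\lambda_i$.)
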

\begin{proof}
  This lemma is essentially in \cite{PPR}, but not stated as such explicitly. \cite{EFG} will serve as a convenient reference (in fact, the first part is proved there explicitly, see Proposition 2.1). We may assume without loss of generality that $k$ is sufficiently large and that $\lambda < k/2$ since otherwise, with an appropriate choice of constants, the right-hand side of (\ref{eq:with lambda}) will be larger than 1. Let $\eps\in (0,\frac12)$ be some parameter. Let $B(i,k,\eps)$ be the event that $\sigma_i$ has at least $(1+\eps)\log k$ cycles whose sizes are less than $k$. The lemma is an easy corollary from the following estimates:
\begin{enumerate}
  \item $\PP(B(i,k,\eps))\le C_1(\eps)k^{-\eps^2/3}$.
  \item $\PP(E(i,k)\setminus B(i,k,\eps))\le C_2(\eps)k^{\log 2-1+2\epsilon}$.
\end{enumerate}
The first estimate follows from \cite[Lemma 2.2]{EFG} and a little bit of calculus (if you prefer to see the calculus done explicitly, see the beginning of the proof of \cite[Proposition 2.1]{EFG}, but not in the arXiv version, only in the official journal version). The second estimate is \cite[Lemma 2.3]{EFG}.

Our lemma now follows by noting that for any $l\in [k/2,2k]$ and any $k>C_3(\eps)$, $B(i,l,\eps)$ implies $B(i,2k,\epsilon/2)$. Hence
\begin{multline*}
 P\coloneqq\PP\bigg(\bigcup_{l=k}^{2k}\bigcup_{\lambda_1=0}^\lambda\dotsb\bigcup_{\lambda_4=0}^\lambda\bigcap_{i=1}^4E(i,l-\lambda_i)\bigg) \le\\
  \le \sum_{i=1}^4\PP(B(i,2k,\epsilon/2))+
  \sum_{l=k}^{2k}\sum_{\lambda_1=0}^\lambda\dotsb\sum_{\lambda_4=0}^\lambda
    \prod_{i=1}^4\PP(E(i,l-\lambda_i)\setminus B(i,l-\lambda_i,\eps)).
\end{multline*}
Since $4(\log 2-1)<-1$ we may choose $\eps$ such that $4(\log 2-1+2\eps)<-1$ ($\eps=0.01$ does the trick) and continue the calculation to get
$$
  P\le 4\cdot C_1(\eps/2)k^{-\eps^2/12}
    +(\lambda+1)^4\sum_{l=k/2}^{2k}C_3(\eps)\cdot k^{4(\log 2-1+2\eps)}
  \le C_4(\eps)(\lambda+1)^4k^{-c},
$$
as needed.
\end{proof}

%\begin{theorem}
%Let $N$ be a positive integer divisible by at least $4$ distinct primes. Let $f= \sum_{i=0}^n \zeta_i X^i$ be a polynomial, where $\zeta_1,\zeta_2,\ldots$ are i.i.d.\ random variables taking values uniformly in $\{1,\ldots, N\}$. Then 
%\[
%\lim_{n\to \infty}\PP(f \mbox{ is irreducible}) =1.
%\]
%\end{theorem}

\begin{proof}[Proof of Theorem \ref{thm:irred}]
Let us start the proof with the following reduction: it suffices to show, for every $k<n$, that the probability that $f$ has a divisor of degree between $k$ and $2k$ is smaller than $C/\log^2k$. Indeed, once this is proved, one may handle divisors of small degree using Lemma \ref{lem:Konyagin}, and then sum over $k$ running through powers of 2 from $\omega(n)$ ($\omega$ from Lemma \ref{lem:Konyagin}) to $n$. Let us, therefore, fix one $k<n$ until the end of the proof.

Let $\red_p(f)$ be the polynomial we get by reducing the coefficients of $f$ modulo $p$. Then $\red_{p}(f)$ is a random uniform polynomial in $\FF_p$, for every $p\mid L$, and the different $\red_p(f)$ are independent. For $r\in\{1,2,3,4\}$ Let $X_r$ be an $\Omega$-valued random variable which takes the value $(m_{1,r},m_{2,r},\dotsc)$ if the reduction of $f$ modulo the $r^{\textrm{th}}$ prime has $m_{i,r}$ irreducible factors of degree $i$ for all $i$. Let $\mQ$ be the event that for some $k\le l< 2k$ we may write $l=\sum il_{i,r}$ for some $l_{i,r}\le m_{i,r}$ for all $r=1,2,3,4$. Further, let $\mB$ be the event that for some $r\in\{1,2,3,4\}$ and some $i<\log^2k$ we have $m_{i,r}>\log^2k$.

Now, by Lemma \ref{lem:miexponentially}, $\PP(\mB)\le 4\cdot \log^2 k\cdot C e^{-c\log^2 k}$ which is negligible. As for $\mQ\setminus\mB$, it is contained in the event that some $k\le l<2k$ and some $\lambda_r<\log^6 k$ we may write 
\[
l-\lambda_r=\sum_{i>\log^2k}il_{i,r}\qquad l_{i,r}\le m_{i,r}
\]
for all $r\in\{1,2,3,4\}$. Denote this event by $\mR$. Then $\mR$ is invariant to changes in the first $\log^2 k$ values of $m$ and hence by Lemma \ref{lem:ABT}%better} 
\[
|\PP_{X_n}(\mR)-\PP_{Y_n}(\mR)|\le C/\log^2 k.%q^{-c\log^2 k}
\] 
%which is, again, negligible when compared to our goal estimate, $Ck^{-c}$
(Formally, Lemma \ref{lem:ABT} is formulated for a single $m$ and here we have 4, but this is equivalent. The easiest way to see this is probably to use Lemma \ref{lem:ABT} to construct a coupling between a single polynomial and a single permutation that succeeds with probability $C/\log^2k$ and then simply couple the 4 polynomials to 4 permutations independently.) Finally, $\PP_{Y_n}(\mR)$ can be estimated by Lemma \ref{lem:PPR} to get
\[
\PP_{Y_n}(\mR)\le Ck^{-c}\log^{24}k.
\]
We conclude that $\PP_{X_n}(\mR)\le C/\log^2k$, hence that $\PP_{X_n}(\mQ\setminus\mB)\le C/\log^2 k$, and hence that $\PP_{X_n}(\mQ)\le C/\log^2 k$. As explained in the first paragraph, this completes the proof of the theorem.
\end{proof}

\subsection{The Galois group}\label{sec:galois}
For a permutation $\sigma$ and an integer $k$ let $\Y(\sigma,k)$
be all permutations one may get by changing $\sigma$ in elements
belonging to cycles of $\sigma$ each of whose length does not exceed
$k$.
\begin{lemma}
\label{lem:all transitive}For any $\alpha<1-\frac{1+\log\log2}{\log2}$
the following holds. Let $\sigma$ be a random permutation. Then the
probability that there exists a transitive subgroup $G\not\ge A_{n}$ in $S_n$
such that $G\cap \Y(\sigma,n^{\alpha})\ne\emptyset$ goes to $0$ as
$n\to\infty.$
\end{lemma}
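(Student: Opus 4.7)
The plan is a first-moment estimate. Set $k=n^{\alpha}$ and let $S_\sigma$ denote the union of the cycles of $\sigma$ of length at most $k$. The crucial structural observation is that every $\tau\in\Psi(\sigma,k)$ coincides with $\sigma$ outside $S_\sigma$, so $\tau$ has all long cycles of $\sigma$ (those of length $>k$) as cycles of $\tau$ itself. Any transitive $G\le S_n$ with $G\not\ge A_n$ sits inside a maximal subgroup of $S_n$ which, being transitive, must be either (i) primitive with $G\not\ge A_n$, or (ii) an imprimitive wreath product $S_d\wr S_{n/d}$ for some proper divisor $d$ of $n$. I would treat these two cases separately.

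The primitive case is routine. Bochert's theorem gives $|G|\le 4^n$ for primitive $G\not\ge A_n$; the number of primitive subgroups of $S_n$ is at most $n^{O((\log n)^2)}$; and Lemma~\ref{lem:miexponentially} applied to $Y_n$ gives concentration $|S_\sigma|=O(n^{\alpha}\log n)$ with overwhelming probability, so $|\Psi(\sigma,k)|=|S_\sigma|!\le\exp(O(n^{\alpha}(\log n)^2))$. A direct union bound over $\tau\in\Psi(\sigma,k)$ and over primitive maximal subgroups is $o(1)$ for any $\alpha<1$.

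The imprimitive case is the substantive one. For each $d\mid n$ with $1<d<n$, I would bound
\[
E_d := \mathbb{E}_\sigma\bigl|\{\tau\in\Psi(\sigma,k):\tau\text{ preserves some block system of block size }d\}\bigr|
\]
by interchanging summation orders. Parametrise pairs $(\sigma,\tau)$ contributing to $E_d$ by triples $(L,\tau_L,\tau_{L^c})$ where $L\subseteq[n]$, $\tau_L$ is a permutation of $L$ with all cycles of length $>k$, and $\tau_{L^c}$ is any permutation of $[n]\setminus L$ such that $\tau:=\tau_L\sqcup\tau_{L^c}$ preserves some block system of size $d$; each triple is weighted by $S_k(n-|L|)$, the number of permutations of $[n]\setminus L$ with all cycles of length $\le k$. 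The target is $\sum_d E_d=o(1)$.

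The main obstacle is estimating $E_d$ tightly and uniformly in $d$; I expect the binding case to lie in an intermediate range of $d$. The threshold $\alpha_0 = 1-(1+\log\log 2)/\log 2 = \log_2\bigl(2/(e\log 2)\bigr)$ equals the minimum of $c-\log_2 c$ over $c>0$, attained at $c=\log_2 e$. This suggests that the proof optimizes two competing effects: the entropy of the block system (favouring large $d$), against the probabilistic cost of matching the cycle structure forced by $\sigma$'s long cycles (favouring small $d$). Quantitatively I anticipate Stirling asymptotics for $|S_d\wr S_{n/d}|$ together with cycle-index generating functions yielding precisely the stated constant.
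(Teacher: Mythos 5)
Your reduction to maximal subgroups and your observation that every $\tau\in\Y(\sigma,n^{\alpha})$ retains all long cycles of $\sigma$ are correct, but both of your cases have genuine gaps. In the primitive case the union bound cannot work no matter how sharp the estimates: $n^{O((\log n)^{2})}$ bounds (at best) the number of \emph{conjugacy classes} of primitive groups, not the number of subgroups — a single class, e.g.\ the conjugates of $AGL(1,p)$ in $S_p$, contains about $n!/|G|$ subgroups. For a fixed $G$ the natural estimate is $\PP\big(\Y(\sigma,n^{\alpha})\cap G\neq\emptyset\big)\lesssim |G|\,n^{O(n^{\alpha}\log n)}/n!$, and summing this over the conjugates of one self-normalizing maximal primitive $G$ already gives at least $n^{O(n^{\alpha}\log n)}\geq 1$; even in the unperturbed problem the union bound over one class yields $|G|/|N_{S_n}(G)|\approx 1$. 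This is exactly why \L uczak--Pyber, and the paper, do not count subgroups at all: they use the minimal degree of primitive groups (Bochert for doubly transitive groups, Babai otherwise) to get $\min\deg G\ge(\sqrt{n}-1)/2$ for primitive $G\not\ge A_n$, and then show that with probability $1-o(1)$ the permutation $\sigma$ has a cycle of length $l\in[n^{\alpha},n^{0.49}]$ with a prime factor $p>\log^{3}n$ dividing no other cycle length; any $\rho\in\Y(\sigma,n^{\alpha})$ keeps this cycle, and a suitable power $\rho^{M}$ is a nontrivial permutation moving only $O(n^{0.49})$ points, which rules out membership in any such primitive group. That deterministic structural step is the missing idea, and it is what removes the need to enumerate groups.

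The imprimitive case, which you yourself flag as unresolved, is the heart of the lemma, and the first-moment frame you set up is also doomed at the outset: for every divisor $d$ of $n$ the expected number of block systems with blocks of size $d$ preserved by a uniform $\sigma$ equals exactly $1$ (the number of systems, $n!/((d!)^{n/d}(n/d)!)$, times the probability $((d!)^{n/d}(n/d)!)/n!$ of preserving a fixed one), so any bound on $E_d$ obtained by interchanging sums over block systems — and the perturbation only inflates this — is at least $1$, and $\sum_d E_d$ can never be $o(1)$; one has to exploit that the number of preserved systems is usually $0$ and occasionally huge, which a first moment cannot see. Moreover, the constant $1-\frac{1+\log\log 2}{\log 2}$ is not produced by Stirling or cycle-index asymptotics in the paper: it is imported from the Eberhard--Ford--Green theorem that a random permutation has an invariant set of size within $2n^{\alpha}\log n$ of a prescribed value with probability $n^{\alpha-\delta+o(1)}$ — a nontrivial external input (your observation that $\delta$ arises from optimizing $c-\log_2 c$ is consistent with where EFG get it, but that is their proof, not a consequence of your setup). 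The paper, following \L uczak--Pyber, splits the number of blocks $r$ into three ranges and uses, besides EFG, a cycle of length $>n^{0.99}$ not divisible by $r$, a cycle divisible by a prime $>n\exp(-\log\log n\sqrt{\log n})$, and the fact that two distinct cycles have gcd at most $n^{0.9}$, with the perturbation absorbed because all these long cycles survive passage to $\Y(\sigma,n^{\alpha})$. Without these inputs your $E_d$ cannot be driven below $1$, let alone summed to $o(1)$.
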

We follow \L uczak and Pyber \cite{LP93} closely (they proved that $\mathbb{P}(\exists G:\sigma\in G)\to0$ i.e.\ the same result but without allowing for small perturbations. See also \cite{EFG0} for a lower bound on the probability).
\begin{lemma}
\label{lem:primitive}Let $P$ be the probability that there exists
$G\not\ge A_{n}$ primitive such that $G\cap \Y(\sigma,n^{\alpha}) \neq \emptyset$.
Then if $\alpha<0.49$ then $P\to 0$ as $n\to\infty$.
\end{lemma}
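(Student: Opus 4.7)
The plan is to adapt the Luczak-Pyber strategy to the perturbation setting by combining the cycle-structure Lemmas~\ref{lem:no double divisor} and~\ref{lem:unique prime} with a classical minimum-degree bound for primitive permutation groups. Choose constants $\alpha<a<b<1/2$ (possible since $\alpha<0.49$). Lemma~\ref{lem:unique prime} (applied with $a,b$) together with Lemma~\ref{lem:no double divisor} show that with probability tending to $1$, $\sigma$ admits a \emph{marker cycle} of length $l\in[n^a,n^b]$ whose length has a prime divisor $p>\log^3 n$, and this $p$ divides no other cycle length of $\sigma$. A standard variance computation further shows that the total length $S$ of cycles of $\sigma$ of length at most $n^\alpha$ satisfies $S=o(\sqrt n)$ with probability tending to $1$ (using $\mathbb{E}[S]=n^\alpha$ and $\mathrm{Var}(S)=O(n^{2\alpha})$ for $\alpha<1/2$). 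We condition on all these typical events.

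Suppose, for contradiction, that $\tau\in G\cap\Y(\sigma,n^\alpha)$ for some primitive $G\not\ge A_n$. Since $l>n^\alpha$, the marker is also a cycle of $\tau$. Let $L$ be the order of $\tau$ and set
\[
\eta \;:=\; \tau^{L/p^{v_p(L)}} \;\in\; G,
\]
where $v_p$ denotes the $p$-adic valuation. A direct $p$-adic computation shows that on a cycle of $\tau$ of length $l'$, the element $\eta$ breaks it into $l'/p^{v_p(l')}$ cycles of length $p^{v_p(l')}$; in particular $\eta$ fixes every cycle of $\tau$ whose length is coprime to $p$ and moves every point of every $p$-divisible cycle. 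Hence $\eta\neq 1$ (since the marker is $p$-divisible) and $\mathrm{supp}(\eta)$ equals the union of all $p$-divisible cycles of $\tau$. By Lemma~\ref{lem:no double divisor}, the marker is the only \emph{long} cycle of $\tau$ (inherited from a long cycle of $\sigma$) with $p$-divisible length; every other $p$-divisible cycle of $\tau$ lies in the ``short part'' (the elements belonging to cycles of $\sigma$ of length $\le n^\alpha$), whose total size is $\le S$. Therefore $|\mathrm{supp}(\eta)|\le l+S\le n^b+o(\sqrt n)=o(\sqrt n)$, and Babai's classical bound (a primitive $G\le S_n$ with $G\not\ge A_n$ has minimum degree at least $(\sqrt n-1)/2$) contradicts $\eta\in G\setminus\{1\}$.

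The main obstacle is controlling the ``short part'' of $\tau$: rearrangements inside $\Y(\sigma,n^\alpha)$ can in principle generate new cycles of $\tau$ whose length is divisible by $p$, but since each such cycle must have length $\ge p>\log^3 n$, and since the short part of $\tau$ is supported in a set of size $S$, the total contribution to $\mathrm{supp}(\eta)$ from this part is bounded by $S=o(\sqrt n)$. The threshold $\alpha<0.49$ in the lemma reflects the $\sqrt n$-scale of Babai's minimum-degree bound: the only place it enters is in ensuring that we can choose $\alpha<a<b<1/2$, so that both $n^b$ and $S$ are $o(\sqrt n)$.
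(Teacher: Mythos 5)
Your proposal is correct and follows essentially the same route as the paper: isolate a ``marker'' cycle of length in $[n^a,n^b]$ with a prime divisor $p>\log^3 n$ unique to it (Lemmas~\ref{lem:unique prime} and~\ref{lem:no double divisor}), raise the perturbed permutation to the $p'$-part of its order to get a nontrivial element of support $O(n^b)+o(\sqrt n)$, and contradict the Bochert--Babai minimal-degree bound $(\sqrt n-1)/2$ for primitive $G\not\ge A_n$ (the paper's Claim~\ref{claim:mindeg}). The only cosmetic differences are that the paper bounds the perturbed set by $2n^\alpha\log n$ via the $2\log n$ bound on the number of cycles rather than your Chebyshev estimate on $S$, and it writes the power as $\rho^M$ with $M$ the product of the non-$p$ prime powers in the cycle lengths, which has the same effect as your $\tau^{L/p^{v_p(L)}}$.
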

(the rate of decay may depend on $\alpha$).
\begin{proof}
We follow \cite{LP93}. The proof of \cite{LP93} revolves around
the notion of the  \emph{minimal degree} of a permutation group. Let us
define it even though it is classical. For a permutation $\sigma$
define 
\[
\deg\sigma=\#\{i\in\{1,\dotsc,n\}:\sigma(i)\ne i\}
\]
i.e.\ the number of elements moved by $\sigma$; and for a group
$G$ of permutations we define its minimal degree by 
\[
\min\deg G=\min_{g\in G\smallsetminus\{1\}}\deg g.
\]
Then
\begin{claim}
\label{claim:mindeg}If $G\not\ge A_{n}$ is primitive then $\min\deg G\ge(\sqrt{n}-1)/2$. 
\end{claim}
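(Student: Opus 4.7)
The plan is to cite this as a classical theorem of Babai. Specifically, Babai proved in ``On the order of uniprimitive permutation groups'' (Ann.\ of Math., 1981) that any primitive subgroup $G\le S_n$ with $G\not\ge A_n$ satisfies $\min\deg G\ge (\sqrt n-1)/2$, which is precisely the statement of the claim. This result strengthens a much older theorem of Bochert and is known to be essentially sharp, being nearly attained by the action of $S_m$ on the $\binom m2$ unordered pairs from $\{1,\dotsc,m\}$, where an element like $(1\,2)$ moves $2(m-2)\sim 2\sqrt n$ pairs.

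For a reader who wants to see the idea rather than chase the reference, here is a sketch. Pick $\sigma\in G\setminus\{1\}$ whose support $S$ has minimal cardinality $d=\min\deg G$. The strategy is to show that if $d<(\sqrt n-1)/2$, one can always find a conjugate $\tau=g\sigma g^{-1}$ such that the commutator $h=[\sigma,\tau]$ is nontrivial but has support strictly smaller than $d$, contradicting minimality. The support of $h$ is contained in $S\cup g(S)$, and a point of $S\cup g(S)$ at which $\sigma$ and $\tau$ ``locally commute'' is fixed by $h$; thus the degree of $h$ is controlled by the intersection pattern of $S$ and $g(S)$. One then needs a $g\in G$ with $|S\cap g(S)|$ taking some intermediate value --- neither $0$ (so that $\sigma,\tau$ commute and $h=1$) nor $d$ (so that $g$ stabilises $S$ setwise). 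A pigeonhole argument on the quantity $\sum_{g\in G}|S\cap g(S)|$, whose average is $d^2/n$, produces such a $g$ provided $d$ is below the Babai threshold.

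The main obstacle in turning this sketch into a complete proof is the use of primitivity. It is not enough to know that $\sum_{g}|S\cap g(S)|/|G|=d^2/n$; one must also rule out the degenerate situation in which $|S\cap g(S)|$ is always either $0$ or $d$, which would force $\{g(S):g\in G\}$ to be a system of pairwise disjoint translates of $S$ --- a nontrivial block-like structure that contradicts primitivity. Translating this into a concrete lower bound on the variance of $|S\cap g(S)|$ is the delicate step in Babai's argument. Since the claim is exactly a published theorem, my proposal is simply to quote Babai and note that this is the only non-elementary input needed to complete the proof of Lemma~\ref{lem:primitive}.
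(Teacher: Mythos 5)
Your overall plan --- quote the literature rather than reprove a deep result --- is the same as the paper's, but your citation, as stated, does not cover the whole claim. Babai's Annals paper \cite{B81} is about \emph{uniprimitive} groups, i.e.\ primitive but not doubly transitive, and its minimal-degree bound is proved only in that case; it is therefore not ``precisely the statement of the claim''. The doubly transitive case is genuinely outside that reference, and this is exactly why the paper's proof splits in two: for primitive, non-doubly-transitive $G$ it uses Theorem~0.3 of \cite{B81} (for any $a\ne b$ there are at least $(\sqrt n-1)/2$ points $c$ whose stabiliser separates $a$ from $b$), from which the bound follows by taking $a\ne b$ in a nontrivial cycle of a given $g\ne 1$ and noting that $g$ fixes none of these $c$; for doubly transitive $G\not\ge A_n$ it invokes Bochert's theorem \cite{B1892}, $\min\deg G\ge n/4$, which is more than enough. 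To make your proposal complete you must add the doubly transitive case (Bochert, or a CFSG-based bound); as written there is a gap, albeit one fixed by a single extra citation.

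A secondary point: the heuristic sketch you give (element of minimal support, a conjugate with partially overlapping support, a commutator of strictly smaller support, and an averaging argument on $\sum_g|S\cap g(S)|$) is the classical Jordan--Bochert style argument, not the argument of \cite{B81}, which is combinatorial and goes through the separating-number statement quoted above. It is not clear that the commutator argument, as sketched, produces a bound of order $\sqrt n$ for all primitive groups, so it should be presented as motivation only and not as an outline of the cited proof. Since your proposal ultimately rests on the citation rather than the sketch, this does not affect correctness once the two-case structure is restored, but the attribution of the method should be adjusted accordingly.
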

\begin{proof}
There are two cases to consider. The first is that $G$ is doubly
transitive, i.e.\ for any $a\ne b$ and $c\ne d$ in $\{1,\dotsc,n\}$
one may find a permutation $\sigma\in G$ such that $\sigma(a)=c$
and $\sigma(b)=d$. This case goes back to \cite{B1892} who showed
that in this case $\min\deg G\ge n/4$, which is bigger than the required
$(\sqrt{n}-1)/2$. The other case is more recent,
having been done in \cite{B81}: Theorem~0.3 of \cite{B81} states
that for a primitive non-doubly-transitive permutation group $G$ and 
for any $a\ne b$ in $\{1,\dotsc,n\}$ there are at least $(\sqrt{n}-1)/2$
different values of $c\in\{1,\dotsc,n\}$ such that $a$ and $b$
are in different orbits of the stabiliser of $c$ (the stabiliser of $c$ is the subgroup $H=\{g\in G: g(c)=c\}$, and an orbit of $H$ is a set $A\subset\{1,\dotsc,n\}$ such that $\forall a\in A$ and $\forall h\in H$ we have $h(a)\in A$).
Let therefore $g\in G\smallsetminus\{1\}$ and $a\ne b$ be in some
non-trivial cycle of $g$. Then clearly $g$ may not be in the stabiliser
of any of the $c$ given from \cite[Theorem 0.3]{B81}, so $\deg g\ge(\sqrt{n}-1)/2$.
\end{proof}
Returning to the proof of Lemma \ref{lem:primitive}, we apply Lemma \ref{lem:unique prime} to find some cycle of our random permutation $\sigma$ whose length $l$ is in $[n^\alpha,n^{0.49}]$ and which has a prime divisor $p$ such that $p>\log^3n$ --- Lemma \ref{lem:unique prime} shows that this can be done with probability tending to 1. We apply Lemma \ref{lem:no double divisor} to see that $p$ does not divide the length of any other cycle of $\sigma$, again with probability tending to 1.

Let now $\rho$ be any permutation in $\Y(\sigma,n^\alpha)$. Because $l>n^\alpha$, $\rho$ will preserve the cycle of length $l$ from $\sigma$. With probability tending to 1, $\sigma$ has no more than $2\log n$ cycles (see, e.g., \cite[Claim 1(i)]{LP93}). Hence $\rho$ is different from $\sigma$ in no more than $(2\log n)n^\alpha$ places, and in particular the total length of all cycles of $\rho$ whose length is divisible by $p$ is no more than $l+2n^\alpha\log n$.

Let therefore $M$ be the product of all primes powers dividing lengths of cycles of $\rho$, different from powers of $p$. Then the points not fixed by $\rho^M$ are exactly points which belong to cycles of $\rho$ divisible by $p$, and by the previous discussion there are no more than $l+2n^\alpha\log n$ of those (but at least $l$). In other words, $\deg \rho^M\le Cn^{0.49}$ and $\rho^M\ne 1$. Claim~\ref{claim:mindeg} then implies (for $n$ sufficiently large) that $\rho$ cannot belong to any primitive $G\not\ge A_n$, finishing the proof.
\end{proof}
\begin{lemma}
\label{lem:imprimitive}Fix $\alpha<\delta=1-\frac{1+\log\log2}{\log2}$ and let $P$ be the probability that there exists a transitive imprimitive group $G\leq S_n$ such that $G\cap \Y(\sigma,n^{\alpha})\ne\emptyset$. Then $P\to0$
as $n\to\infty$.
\end{lemma}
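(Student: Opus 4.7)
The plan is to follow the Łuczak--Pyber treatment of the imprimitive case in \cite{LP93}, modified to tolerate the perturbations encoded by $\Y(\sigma,n^\alpha)$. An imprimitive transitive subgroup $G\le S_n$ preserves a partition of $\{1,\dotsc,n\}$ into $m$ blocks of common size $b$ with $1<b<n$ and $bm=n$. Any $\rho\in G$ induces an action on blocks $\bar\rho\in S_m$; each $\rho$-cycle lies over a cycle of $\bar\rho$ and has length a multiple of that block-cycle's length, and the $\rho$-cycles over a common block-cycle of length $c$ have lengths summing to $bc$. Thus the cycle-length multiset of $\rho$ admits a partition into groups each summing to a multiple of $b$, with further divisibility constraints group by group.

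The first step is to note that any $\rho\in\Y(\sigma,n^\alpha)$ agrees with $\sigma$ outside the union $U$ of $\sigma$-cycles of length at most $n^\alpha$; since the expected contribution of cycles of length $i$ to $|U|$ is $1$, a first-moment estimate gives $|U|\le n^\alpha\log n$ with probability tending to $1$. Consequently every $\sigma$-cycle longer than $|U|$ is a $\rho$-cycle, and every $\rho$-cycle not inherited from $\sigma$ has length at most $|U|$. Next apply Lemmas \ref{lem:unique prime} and \ref{lem:no double divisor} with a choice $\alpha<a<b_0<\delta$ (the subscript to distinguish from the block size $b$) to produce, with high probability, a cycle $C$ of $\sigma$ of length $l\in[n^a,n^{b_0}]$ carrying a prime divisor $p>\log^3n$ which divides no other $\sigma$-cycle length. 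Since $l>|U|$, $C$ is also a cycle of $\rho$, and no new $\rho$-cycle has length divisible by $p$.

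Now assume $\rho\in G$ with $G$ imprimitive of block size $b$, and let $c$ be the length of the block-cycle beneath $C$. Then $c\mid l$ and the $\rho$-cycles over this block-cycle have total length $bc$. If $p\mid c$, every cycle in this group has length divisible by $p$, so by uniqueness the group is $\{C\}$ alone and $b=l/c$ is a specific divisor of $l$. If $p\nmid c$, then $cp\mid l$ and $bc\ge l$ gives $b\ge p>\log^3n$, so $b$ is a large divisor of $n$. Producing a second distinguished cycle via Lemma \ref{lem:unique prime} in a different range, and passing to the dual block system of size $n/b$ when $b$ is close to $n$, pins $b$ to a very short list of divisors of $n$. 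The Erdős--Ford type estimate on the density of integers with a divisor in a short interval controls the size of that list, and its threshold exponent is exactly $\delta=1-(1+\log\log 2)/\log 2$, which is why the hypothesis $\alpha<\delta$ is made. A union bound over the candidate block sizes then yields $P\to 0$.

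The main obstacle is the case $p\mid c$, where $b=l/c$ is forced to be a divisor of a particular cycle length: one must show that no such divisor can simultaneously be a valid block size for $n$ and be compatible with the size constraints inherited from $\Y(\sigma,n^\alpha)$. This is precisely what the Erdős--Ford/Tenenbaum divisor estimate governs, and its optimal exponent is $\delta$, which is the source of the restriction in the lemma. A secondary technical point is ensuring that the perturbations available in $\Y(\sigma,n^\alpha)$ cannot create new cycle structures that rescue an otherwise forbidden block system; this is handled by the bound $|U|\le n^\alpha\log n$, which guarantees that the perturbed cycles are too small to interact with the distinguished cycle $C$ or to contribute a cycle divisible by $p$.
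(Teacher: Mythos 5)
Your sketch diverges from the paper's argument and, as written, has genuine gaps. First, the assertion that ``no new $\rho$-cycle has length divisible by $p$'' does not follow from Lemmas \ref{lem:unique prime} and \ref{lem:no double divisor}: an element $\rho\in\Y(\sigma,n^{\alpha})$ may rearrange the set $U$ of points in short $\sigma$-cycles into new cycles of any length up to $|U|\approx n^{\alpha}\log n$, which is far larger than $\log^{3}n$, so such a new cycle can perfectly well have length divisible by $p$. Consequently, in your case $p\mid c$ the group of $\rho$-cycles over the block-cycle beneath $C$ need not be $\{C\}$ alone, and you only get $bc=l+O(n^{\alpha}\log n)$ rather than the exact divisibility $b=l/c\mid l$; the whole point of the quantitative hypothesis $\alpha<\delta$ is to beat exactly this additive window, and your sketch never carries out that estimate. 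Second, the appeal to ``the dual block system of size $n/b$'' is not available: a transitive imprimitive group preserving blocks of size $b$ need not preserve any complementary system of blocks of size $n/b$, so this step of the pinning argument fails. Third, even granting that $b$ were pinned to a short list of divisors, you still need, for each candidate block structure, an upper bound on the probability that the (perturbed) cycle type of $\sigma$ is compatible with it; ``the list is short'' is not by itself a bound on $P$.

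The paper's proof supplies precisely these missing ingredients, and the source of the exponent $\delta$ is different from what you guess: it is not the Erd\H{o}s--Ford--Tenenbaum density of integers with a divisor in an interval, but the Eberhard--Ford--Green theorem \cite{EFG0} that a random permutation has an invariant subset of a prescribed size with probability $n^{-\delta+o(1)}$. The paper splits according to the number of blocks $r$. For small $r$ (up to $\exp(\log\log n\sqrt{\log n})$) it uses \cite[Claim 2]{LP93} to find a cycle of length $>n^{0.99}$ not divisible by $r$; this cycle survives in $\rho$, its blocks give a proper invariant set of $\rho$ of size $nk/r$, hence an invariant set of $\sigma$ of size within $2n^{\alpha}\log n$ of $nk/r$, and \cite{EFG0} plus a union bound over the $n^{o(1)}$ pairs $(k,r)$ gives probability $n^{\alpha-\delta+o(1)}$ --- this is where $\alpha<\delta$ enters. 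For $r$ in the middle range it uses a cycle with a prime factor larger than $n\exp(-\log\log n\sqrt{\log n})$, and for $r$ close to $n$ it uses the bound $\gcd$ of any two cycle lengths $\le n^{0.9}$ together with the long cycle not divisible by $s=n/r$. Your argument does not address how to handle these ranges of $r$, and in particular has no mechanism replacing the invariant-set estimate; so as it stands the proposal does not prove the lemma.
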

\begin{proof}
If $G$ is transitive and imprimitive then there exists a nontrivial block system
preserved by $G$, i.e.\ one may write $n=rs$ with $r,s>1$ such that there is a division
of $\{1,\dotsc,n\}$ into disjoint sets $A_{1},\dotsc,A_{r}$ of common
size $s$ such that for every $\rho\in G$ and for any $i\in\{1,\dotsc,r$\},
$\rho(A_{i})=A_{j}$ for some $j$. The proof revolves around the interaction between this block system and cycles of $\rho$. Let $L=\{\ell_1,\ell_2,\dotsc\}$ be some cycle of a $\rho\in G$. Then there must exist some $i_1,\dotsc,i_k$ such that $\ell_1\in A_{i_1}, \ell_2\in A_{i_2},\dotsc,\ell_k\in A_{i_k},\ell_{k+1}\in A_{i_1}$ and then the cycle of $A_{i_j}$ repeats. In particular, $k$ must divide $|L|$, the length of $L$. And of course, $k\le r$ and $|L|/k\le s$.

Following \cite{LP93} we divide
the proof to three cases according to the value of $r$.

\subsubsection*{Case 1} $2\le r\le\exp(\log\log n\sqrt{\log n})$. Let $E_1$ be the event
that the random permutation $\sigma$ has, for each such $r$, a cycle
$L_{r}$ whose length is $>n^{0.99}$ and is not divisible by $r$.
By \cite[Claim 2]{LP93} $\mathbb{P}(E_1)\to1$ as $n\to\infty$. Let $E_2$ be the event that $\sigma$ has no more than $2\log n$ cycles. By \cite[Claim 1(I)]{LP93} $\PP(E_2)\to 1$ as $n\to\infty$. Let $E=E_1\cap E_2$. The
cycle $L_{r}$ cannot be changed by changing short cycles of $\sigma$
so it survives in any $\rho\in \Y(\sigma,n^{\alpha})$. Let $A_{i_1},\dotsc,A_{i_k}$ be the
set of $A_{i}$ which intersect $L_{r}$. We cannot have $k=r$ because $r$ does not divide the $|L_{r}|$. Since
$\bigcup A_{i_j}$ is invariant under
$\rho$ we found an invariant subset of $\rho$ of size $n(k/r)$. If we are also in $E_2$, then $\rho$ can differ from $\sigma$ in no more than $2n^\alpha\log n$ points, so the existence of an invariant subset of $\rho$ implies that $\sigma$ has an invariant subset
of size $x$ with $|x-nk/r|\le 2 n^{\alpha}\log n$. Denote this event by
$B_{k,r}$. For each $k$ and $r$, $\mathbb{P}(B_{k,r})\le n^{\alpha-\delta+o(1)}$
by Eberhard, Ford and Green \cite{EFG0}. Summing over $k$ and $r$ (a total of $n^{o(1)}$
possibilities) we see that $\mathbb{P}(\bigcup B_{k,r})\le n^{\alpha-\delta+o(1)}$.
However, under $E\smallsetminus\bigcup B_{k,r}$ no $\rho$ may preserve
any partition $A_{1},\dotsc,A_{r}$ with $2\le r\le\exp(\log\log n\sqrt{\log r})$
and this case is finished.

\subsubsection*{Case 2} $\exp(\log\log n\sqrt{\log n})\le r\le n\exp(-\log\log n\sqrt{\log n})$.
Let $E_3$ be the event that the random permutation has a cycle $L$
whose length is divisible by some prime $p>n\exp(-\log\log n\sqrt{\log n})$.
By \cite[Claim 4]{LP93} $\mathbb{P}(E_3)\to1$ as $n\to\infty$. The
cycle $L$ will appear also in any $\rho\in \Y(\sigma,n^{\alpha})$,
and of course prevents $\rho$ from preserving any partition $A_{1},\dotsc,A_{r}$
with $r$ as above, since by the above we can write $|L|=kt$ with $k\le r$ and $t\le s=n/r$, both of which are bounded by $n\exp(-\log\log n\sqrt{\log n})$. Hence this case is also finished.

\subsubsection*{Case 3} $n\exp(-\log\log n\sqrt{\log n})\le r<n$. Let $E_{1}$ be the
same event from case 1, i.e.\ the event that the random permutation
$\sigma$ has, for each $s\le\exp(\log\log n\sqrt{\log n})$, a cycle
$L_{s}$ whose length is $>n^{0.99}$ and is not divisible by $s$.
By \cite[Claim 2]{LP93}, $\mathbb{P}(E_{1})\to1$ as $n\to\infty$.
Let $E_{4}$ be the event that any two cycles $M_{1}$ and $M_{2}$
of $\sigma$ satisfy $\gcd(M_{1},M_{2})\le n^{0.9}$ (here and until
the end of the lemma we do not distinguish between cycles and their
lengths in the notation). By \cite[Claim 1 (ii)]{LP93}, $\mathbb{P}(E_{4})\to1$
as $n\to\infty$ too. Fix now some $r$ as above and let $s=n/r$.
Under $E_{1}$, there exists a cycle $L_{s}$ as above. Because $L_{s}>n^{0.99}$,
it will be preserved in any $\rho\in \Y(\sigma,n^{\alpha})$. Assume
$\rho$ preserves a partition $A_{1},\dotsc,A_{r}$ and denote again the blocks which intersect $L_s$ by $A_{i_1},\dotsc,A_{i_k}$. We cannot have $\bigcup A_{i_j}=L_{s}$ (because $s$ does not divide $L_{s}$) hence $\bigcup A_{i_j}$  must contain at least one additional cycle, denote it by $M$. But then $\gcd(M,L_{s})$ is (at least) $k$ and in particular
$$
\gcd(M,L_{s})\ge k>\frac{L_s}{s}>n^{0.99}\exp(-\log\log n\sqrt{\log n}).
$$
This means, for $n$ sufficiently large, that $M>n^{0.9}$ and hence
$M$ appears also in $\sigma$. But the appearance of both $M$ and
$L_{s}$ in $\sigma$ contradicts the event $E_{4}$. Hence we get
that the event that for some $r$ as above, some $\rho\in \Y(\sigma,n^{\alpha})$
preserves some partition $A_{1},\dotsc,A_{r}$, is contained in $(E_{1}\cup E_{4})^{c}$.
We get that the probability of this event also goes to zero with $n$.
The case is finished, and so is the lemma.
\end{proof}

\begin{proof}
[Proof of Lemma \ref{lem:all transitive}]Lemmas \ref{lem:primitive}
and \ref{lem:imprimitive} do all the work.
\end{proof}

\begin{proof}[Proof of Theorem \ref{thm:galois}]
Recall that $\Y(\sigma,k)$ denotes the set of permutations which differ from $\sigma$ only in elements which belong to cycles shorter than $k$.  By Lemma \ref{lem:all transitive}, with probability tending to 1, there is no transitive subgroup $G\not\ge A_n$ such that $G\cap \Y(\sigma,n^\alpha)\ne\emptyset$. (here $\sigma$ is a random permutation and $\alpha$ is some arbitrary number in $\big(0,1-\frac{1+\log\log 2}{\log 2}\big)$ whose exact value will play no role). Let us reformulate this in the notations of \S{} \ref{sec:p vs p}: for a random tuple $m=(m_1,\dotsc,m_n)$ let $E=E(m)$ be the event that there exists a transitive subgroup $G\not\ge A_n$ and an element $g\in G$ such that $g$ has exactly $m_i$ cycles of length $i$ for all $i\ge n^\alpha$. Then the promised reformulation is:
\[
\lim_{n\to\infty}\PP_{Y_n}(E)= 0.
\]
Further, $E$ is clearly invariant to changing cycles of $\sigma$ shorter than $n^\alpha$. Hence we may apply Lemma \ref{lem:ABT} to it. We get that
\begin{equation}\label{eq:notinsmalltransitive}
\lim_{n\to\infty}\PP_{X_n}(E)=0.
\end{equation}
(Here the underlying finite field $\FF_q$ is taken with respect to a prime $q$ which divides $L$.)

Now let $f = X^n + \sum_{i=0}^{n-1} \zeta_i X^i$, $\zeta_i\in \{1,\ldots, L\}$ be a random polynomial as in the theorem. In particular, $\bar f := \red_p(f)$ is a random monic polynomial of degree $n$ in $\FF_p[X]$. 
Let $N$ be the splitting field of $f$ over $\QQ$ in $\CC$, $R\subseteq N$ the set of roots of $f$, and $G=\Gal(N/\QQ)\leq {\rm Sym}(R)$. %By Theorem~\ref{thm:irred}, with probability tending to 1 we have that $f$ is irreducible, which is equivalent to say that $G$ is transitive.

Let $O$ be the ring of integers of $N$. %By Chevalley's Theorem \cite[Proposition 2.3.1]{FJ3} 
Take a prime ideal $\mathfrak P$ of $O$ that  lies over $p$, i.e.\ such that $\mathfrak P\cap \mathbb{Q}=p\mathbb{Z}$. Choose one such $\mathfrak P$ arbitrarily.  %, and $D=D_{\mathfrak{P}}$ be the inertia and decomposition groups; so $I\lhd D\leq G$. 
The map $O\to O/\mathfrak P$ takes $\mathbb Z$ to $\mathbb F_p$ so if we write, in $O[X]$, $f = \prod_{\rho\in R} (X-\rho)$, then we get for $\bar f$, our reduction of $f$ to $\mathbb F_p$, that $\bar f =  \prod_{\rho\in R} (X-\bar\rho)$, where $\bar \rho$ is the image of $\rho$ under the map $O \to O/\mathfrak P$. 

We may write $\bar f = \phi \psi$, with relatively prime $\phi,\psi \in \FF_p[X]$, such that $\phi$ is squarefree and $\psi$ is squarefull (i.e., the multiplicity of each irreducible factor of $\psi$ is at least $2$). 
The probability that $\bar f$ has a square of degree $k$ dividing it, is $p^{-k/2}$; hence with probability tending to $1$, $\deg \psi\leq n^{\alpha}$ (with $\alpha$ as above). 
We decompose $R$ as $R = R_\phi \cup R_\psi$, with $R_{\phi} = \{ \rho\in R : \phi(\bar\rho)=0\}$ and $R_\psi = R\smallsetminus R_{\psi}$. So the map $\rho\in R_{\phi} \mapsto \bar\rho$ surjects onto the roots of $\phi$. 
Since $\phi$ and $\psi$ are relatively prime, we cannot have $\rho\in R_\phi$ such that $\psi(\bar\rho)=0$. Thus, since $\phi$ is squarefree, the map $\rho \mapsto \bar\rho$ is a bijection from $R_\phi$ onto the roots of $\phi$. 

Now, the map $G\to\Gal((N/\mathfrak P)/\mathbb F_p)$ is onto (see, e.g., \cite[Lemma~6.1.1(a)]{FJ3}) and hence there exists an element $\tau \in G$ which maps to the Frobenius element $x\mapsto x^p$ i.e.\ satisfying
\[
\tau \rho \equiv \rho^p \mod \mathfrak P,
\]
for all $\rho \in R$. 
%Since on $R_\phi$, the map $\rho\mapsto \bar\rho$ is injective (as $\phi$ is squarefree ***Lior, actually, why is that?***, we have that
Thus $\tau$ acts on $R_\phi$ the same as the Frobenius map acts on the roots of $\phi$. The cycle lengths of the latter is the same as the degrees of the irreducible factors of $\phi$ (this is classical, and follows from the fact that Galois groups over  a finite field are generated by the Frobenius element, and the roots of each irreducible factor is an orbit of the Galois group.) The rest of the cycles of $\tau$ are of total size $\leq n^\alpha$. 
All in all, we get that the cycle lengths of $\tau$ distribute the same as of the degrees of the irreducible factors of $\bar f$ up to cycles of length $\leq n^{\alpha}$. By \eqref{eq:notinsmalltransitive}, with probability tending to $1$, the element $\tau$ can not lie in a transitive group other than $A_n$ or $S_n$. But it lies in $G$, so either $G$ is intransitive or $G=A_n$ or $G=S_n$. 
\end{proof}

\section{Heuristics and simulations about the \texorpdfstring{$A_{n}$}{alternating} vs.\ \texorpdfstring{$S_{n}$}{symmetric group} problem}\label{sec:simulations}
\begin{conjecture*}
Let $f$ be as in Theorem~1. Then
\[
\lim_{n\to\infty}\mathbb{P}(\text{the Galois group of \ensuremath{f} is \ensuremath{S_{n}}})=1.
\]
\end{conjecture*}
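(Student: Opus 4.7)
The plan is to reduce the conjecture to showing that the polynomial discriminant $\Disc(f)\in\ZZ$ fails to be a rational square with probability tending to $1$. Combined with Theorems~\ref{thm:irred} and~\ref{thm:galois}, which together force $G_f\in\{A_n,S_n\}$ with probability tending to $1$, this suffices, since for separable $f$ one has $G_f\le A_n$ if and only if $\Disc(f)$ is a rational square.

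First I would exploit the reductions modulo primes $p\mid L$, which are mutually independent uniform monic polynomials of degree $n$ over $\FF_p$. For each odd such $p$, Lemma~\ref{lem:ABT} transports the factorisation statistics of $\red_p(f)$ to those of a uniform permutation in $S_n$. Since a uniform permutation in $S_n$ has sign equidistributed in $\{\pm 1\}$, and $\red_p(f)$ is squarefree with probability $1-1/p+o(1)$, the image of $\Disc(f)$ in $\FF_p$ is a non-square with probability tending to $\tfrac12(1-1/p)$. Any such event rules out $\Disc(f)$ being a rational square, and by independence across $p\mid L$ one gets
\[
\liminf_{n\to\infty}\PP(G_f=S_n)\ \ge\ 1-\prod_{\substack{p\mid L\\p\text{ odd}}}\Bigl(1-\tfrac12(1-1/p)\Bigr),
\]
which is a positive constant but not $1$ (numerically about $0.77$ when $L=210$).

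The main obstacle is amplifying this constant bound to $1-o(1)$. Since the set of primes dividing a fixed $L$ is finite, one must bring in primes $p\nmid L$. For such $p$, each coefficient $\zeta_i\bmod p$ is uniform on the image of $\{1,\ldots,L\}$ in $\FF_p$ --- close to, but not exactly, uniform. The crux becomes controlling the quadratic-character sums
\[
S_{p,n}\ =\ \sum_{\zeta\in\{1,\ldots,L\}^n}\chi_p\bigl(\Disc(X^n+\zeta_{n-1}X^{n-1}+\cdots+\zeta_0)\bigr),
\]
uniformly in $p$ and $n$, together with a weak joint quasi-independence statement across a growing collection $\mathcal{P}$ of such primes, so that near-$\tfrac12$ probabilities may be multiplied over $|\mathcal{P}|\to\infty$ primes.

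I expect this character-sum estimate to be the hard core of the problem. The restricted-coefficients setting does not admit the reductions-modulo-large-primes that power the large-box arguments of Van der Waerden and Rivin, and the permutation-statistics framework of \S\ref{sec:proof} is, by design, insensitive to the sign of the Frobenius beyond what the primes $p\mid L$ already reveal. A structural alternative --- searching for a prime $p$ at which $\red_p(f)$ has exactly one repeated irreducible factor of odd degree, so that $v_p(\Disc(f))$ is forced to be odd --- faces essentially the same obstacle of requiring delicate control over factorisation patterns of $\red_p(f)$ at primes outside $L$.
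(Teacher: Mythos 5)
You should be aware that the statement you were asked to prove is the \emph{conjecture} of \S\ref{sec:simulations}: the paper does not prove it, and explicitly explains why its methods cannot. Your proposal is likewise not a proof, and you say as much yourself; so the honest verdict is that there is a genuine gap, namely the entire ``amplification'' step. The first half of your sketch --- reduce to showing $\Disc(f)$ is not a rational square, and use the independent uniform reductions modulo the primes $p\mid L$ together with the comparison Lemma~\ref{lem:ABT} to force $G_f\not\le A_n$ with probability bounded away from $0$ --- is essentially the heuristic the paper itself records in \S\ref{sec:simulations} (there phrased via the signs of the four Frobenius lifts, giving a constant close to $\tfrac{15}{16}$ rather than your $\approx 0.77$, since the sign argument also works at $p=2$ and needs no quadratic-residue interpretation). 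This part is fine but, as you note, yields only a constant lower bound.

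The gap is that everything beyond the constant bound is asserted, not argued. For a prime $p\nmid L$ with $p>L$ the coefficients modulo $p$ are supported on only $L$ of the $p$ residue classes, so $\red_p(f)$ is very far from a uniform polynomial over $\FF_p$; the polynomial--permutation comparison of \S\ref{sec:p vs p} (Lemma~\ref{lem:ABT}) simply does not apply, and nothing in the paper's toolkit, nor in your proposal, gives any nontrivial bound on the character sums $S_{p,n}$ you write down, let alone the joint quasi-independence over a growing family of such primes that your multiplication of near-$\tfrac12$ probabilities requires. This is exactly the obstruction that makes the restricted-coefficients model hard (the large-box arguments of van der Waerden and Rivin rely on reductions modulo large primes being close to uniform, which fails here), and the only progress in this direction at the time of writing, by Breuillard and Varj\'u \cite{BV}, is conditional on a generalised Riemann hypothesis. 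So your proposal correctly locates the hard core of the problem but does not supply any argument for it; as it stands it proves only the constant lower bound already implicit in the paper.
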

Let us first explain why this conjecture does not follow from our
methods. Indeed, considering reductions modulo $p$ one gets 4 elements
of the Galois group (the lifts of the Frobenius elements), whose conjugation classes are independent, and close to uniform in the sense explained above, i.e.\ with some deviations in the very smallest cycles. These deviations do not change the distribution of the sign because, even after conditioning on all the cycles of small size, the parity of the number of remaining cycles is still approximately uniform (we will not justify this fact here, but it is not difficult). Now, 4 independent uniform permutations have probability exactly $\frac{1}{16}$ to all be in $A_{n}$. The comparison techniques described in \S 2 can be used to conclude that, for a random polynomial, all 4 lifts of the Frobenius element belong to $A_n$ with probability close to $\frac{1}{16}$. Hence we get that the lower bound for the limit
of the probabilities in the conjecture is at least bounded away from $0$,
but not quite $1$.

To differentiate $S_n$ from $A_n$, one may use the discriminant. Recall the definition of the discriminant $\disc (f) = \prod_{i<j} (\alpha_i-\alpha_j)^2$, where $\alpha_1, \ldots, \alpha_n$ are the complex roots of $f$. As a symmetric expression in the roots, $\disc(f)$ is an integer. The basic Galois theoretic property of $ \disc (f)$, for separable $f$, is that $G_f \leq A_n$ if and only if $ \disc(f)$ is a perfect square. Therefore, in order to show in Theorem~\ref{thm:galois} that $\PP(G_f = S_n)\to 1$, we have to show that $\PP(\Delta(f) \neq \square)\to 1$.

Hence it makes sense to study $\disc( f)$ for a random $f$ (say in the model of $\pm 1$ coefficients). Simulations
done by Igor Rivin show that $\log|\disc (f)|$ has an asymptotically
normal law, with average and variance both linear in $n$. It would
be interesting to prove that rigorously, maybe even for the case that
$f$ has gaussian coefficients (in the gaussian case, extremely fine
estimates have been shown for the distribution of the zeroes of $f$,
we covered some references in the introduction). This gives the following crude heuristic: the discriminant is a random very large (exponential in $n$) integer, so the probability that it is a square should be very small. We performed simulations of the probability that the discriminant is a square, and it seems to decay exponentially in the degree, though there are also arithmetic effects: for example,  the discriminant of $\sum_{i=0}^n\pm x^i$ can never be a square when $n\equiv 2$ or $4$ mod $8$. Here is a sketch of an argument by Alexei Entin:

Since $f(x)  = (x^{n+1}-1)/(x-1)$ mod 2, and since $n$ is even we have that $\gcd(x^{n+1}-1,(n+1)x^n)=1$ and in particular $x^{n+1}-1$ is square-free modulo 2, hence so is $f$.
This means that the Galois group of $f$ over the field of 2-addic numbers $\QQ_2$, as a permutation group on the roots, is isomorphic to the Galois group modulo $2$, and so it is same for any choice of $f$. 
Now, it is easy to check that the discriminant of $f$ is not a square in $\QQ_2$ when $n\equiv 2,4 \mod 8$.

Another fact discovered during simulations is that the sign is approximately evenly distributed (though it seems the inhomogeneity does not decay as $n\to\infty$ and depends on arithmetic properties of $n$). As Ofer Zeitouni remarked to us, the sign of the discriminant is simply $(-1)^{\textrm{(number of non-real roots)}/2}$ since the contribution of $(\lambda_i-\lambda_j)^2$ is positive if they are both real, and if $\lambda_j$ is non-real then the contribution of $(\lambda_i-\lambda_j)^2(\lambda_i-\overline{\lambda_j})^2$ is also positive, and similarly in the case that they are both non-real. The only negative contributions come from $(\lambda_i-\overline{\lambda_i})^2$. A lot is known about the number of real zeroes, but to the best of our knowledge their parity has not been studied.

Other interesting phenomena discovered in simulations relate to the powers of 2 that may divide $\disc(\sum\pm x^i)$. Denote by $k$ the maximal number such that $2^k$ divides the discriminant. Then there are many connections between $k$ and the degree, $n$. Let us present simulations for $n\le 100$:
\begin{itemize}
\item If $n$ is even then $k=0$. As explained above, this is because modulo 2 our polynomial is always $(x^{n+1}-1)/(x-1)$ and hence is square-free, so the discriminant is odd.
\item If $n$ is odd then always $k\ge n-1$. Shoni Gilboa gave a beautiful proof of this fact, which we will only sketch: write $\disc(f)=\det(MM^*)$ where $M$ is the Vandermonde matrix $\alpha_i^j$ and $\alpha_i$ are still the roots of $f$. The entries of $MM^*$ can be related to the coefficients of $f$ using the \emph{Newton identities} and it is a simple inductive check that all entries turn out to be odd integers. Subtracting the first row from all the others one can pull out $2^{n-1}$ and still get an integral matrix.
\item The tail of the distribution of $k$ is much fatter than we expected. Values of $120$ and more are easily observed in simulations (say with $10^5$ runs). This is especially true if $n\equiv 3$ mod 4.
\item If $n\equiv 7$ mod 8 then $k$ cannot take the values $n$ and $n+2$.
\item If $n\equiv 3$ mod 8 then $k$ cannot take the values $n$ and $n+4$. Sometimes the value $n+10$ is also prohibited: in our simulations the values of $n$ for which this happened were 27, 43, 51, 67, 75, 91 (notice the absence of 99, so this is not related to modulo 24). 
\item If $n\equiv 1$ mod 4 then $k$ is much more restricted. Occasionally ($n=37, 57, 73, 81, 93$), it may only take the value $n-1$. Sometimes it is restricted to two values ($n=9, 21, 25, 33, 45, 85$). And very typically it is restricted to an arithmetic progression (starting from $n-1$), see table \ref{table:wtf} for the jumps. The jump always divides $n-1$, so the arithmetic progression can be thought of as starting from 0. Notice two values of $n$, 41 and 69, for which the jump is 1, i.e.\ all values are allowed. But these are the only exceptions that came up in our simulations.
  \end{itemize}
We are not sure how all this reflects on the probability that the discriminant be a square, but we thought it is interesting enough to mention.

\begin{table}

\begin{tabular}{|c|c|c|c|c|c|c|c|c|c|}
\hline 
degree & jump & degree & jump & degree & jump & degree & jump & degree & jump\tabularnewline
\hline 
\hline 
9 & 4 & 29 & 2 & 49 & 4 & 69 & 1 & 89 & 2\tabularnewline
\hline 
13 & 3 &  33 & 8 & 53 & 2 & 73 & & 93 & \tabularnewline
\hline 
17 & 2 &  37 &  & 57 &  & 77 & 2 & 97 & 3\tabularnewline
\hline 
21 & 10 & 41 & 1 & 61 & 5 & 81 & \tabularnewline
\hline 
25 & 12 & 45 & 11 & 65 & 2 & 85 & 14\tabularnewline
\hline 
\end{tabular}

\caption{The arithmetic progressions of allowable values of $k$. Holes indicate values of the degree where only one value of $k$ is allowed.}\label{table:wtf}

\end{table}

Finally, we studied the probability that the determinant of a random
matrix with $\pm1$ entries is a square. Since determinant and discriminant
share both the `d' in the beginning and the `minant' at the end, this
seems quite relevant (also, of course, the discriminant has at least two formulas as the determinant of a matrix whose entries are functions of the coefficients: one as the determinant of $MM^*$ that we mentioned above, and another that comes from the fact that it is the resultant of $f$ and $f'$, giving a $(2n+1)\times(2n+1)$ matrix whose entries are multiples of the coefficients of $f$). In this case we did manage to prove the following:
\begin{theorem}
\label{thm:deter}Let $M$ be an $n\times n$ matrix with i.i.d.\ entries
taking the value $0$ with probability $\frac 12$ and the values $1$ and $-1$ with probability $\frac{1}{4}$ each.
Then 
\[
\lim_{n\to\infty}\PP(\exists k\in \mathbb{Z} \textrm{ s.t.\ } \det M=k^2)=0.
\]
%where $\mathcal{S}=\{n^{2}:n\in\mathbb{Z}\}$.
\end{theorem}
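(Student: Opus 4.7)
The strategy is to combine two facts: (i) with high probability $\det M \neq 0$, and (ii) with high probability $\det M$ is not a square modulo a suitable slowly growing composite $m$. Since any $k^2$ reduces to a square modulo $m$, these together cover both the case $k=0$ and the case $k \neq 0$ of the theorem. For (i), the entries are i.i.d., mean zero, symmetric, and satisfy $\PP(X=c) \le \tfrac{1}{2}$ for every $c \in \Z$, so the standard anti-concentration toolkit (Hal\'asz's inequality applied row by row, as in Koml\'os's argument, or the sharper results of Tao--Vu and Rudelson--Vershynin) gives $\PP(\det M=0) \to 0$, which disposes of $k=0$.

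\emph{Modular reduction.} Fix $K = K(n) \to \infty$ growing slowly, and let $m = p_1 p_2 \cdots p_K$ be the product of the first $K$ odd primes. Then
\[
\PP(\exists k \in \Z :\ \det M = k^2) \le \PP(\det M=0) + \PP\bigl(\det M \bmod m \in \mathrm{QR}(m)\bigr).
\]
By CRT the density of squares in $\Z/m\Z$ equals $\prod_{j=1}^K \tfrac{p_j+1}{2p_j} = o(1)$ as $K \to \infty$, so it is enough to show that $\det M \bmod m$ is within $o(1)$ of the uniform distribution on $\Z/m\Z$ in total variation.

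\emph{Equidistribution.} To establish that, I would condition on the first $n-1$ rows of $M$ and Laplace-expand $\det M = \sum_i v_i C_i$ along the last row $v$, where $C_i$ is the $(n,i)$-cofactor. For any odd prime $p \mid m$ and any $\xi \in \Z/p\Z$,
\[
\mathbb{E}\bigl[e^{2\pi i v_j \xi/p}\bigr] = \tfrac{1}{2} + \tfrac{1}{2}\cos(2\pi \xi/p) = \cos^2(\pi\xi/p),
\]
which is strictly less than $1$ for $\xi \not\equiv 0 \pmod p$. Hence the characteristic function of $\det M \bmod p$ at the character $\chi_\xi$ equals $\prod_i \cos^2(\pi C_i \xi/p)$, and this is exponentially small in the number of cofactors $C_i$ that are non-zero modulo $p$. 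A second application of the non-singularity argument, now to the $(n-1)\times(n-1)$ random submatrices of the type considered in the theorem and reduced modulo $p$, shows that a positive fraction of the $C_i$'s are non-zero modulo $p$ with high probability. Summing over the $m$ characters of $\Z/m\Z$ (which is still $e^{o(n)}$ if $K$ grows slowly enough that $p_K = o(\sqrt{n})$) gives the desired total-variation bound simultaneously for all $p_j \mid m$.

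\emph{Main obstacle.} The delicate point is the high-probability lower bound on the number of cofactors $C_i$ non-zero modulo each $p$: a first-moment calculation gives only the expectation, so one needs a bounded-differences inequality (McDiarmid, noting that altering one of the first $n-1$ rows changes only one column of each cofactor matrix) to upgrade it to a typical-event statement. Handling all primes $p_j \le p_K$ simultaneously requires an induction compatible with the fact that $M \bmod p_j$ is not uniformly distributed for small $p_j$, but introduces no conceptually new difficulty beyond the single-prime case.
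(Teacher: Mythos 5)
The paper itself does not contain a proof of Theorem~\ref{thm:deter}: it is explicitly deferred to the complementary paper \cite{nubeemet}, so your argument can only be judged on its own terms, not compared line by line with the authors'.

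Your outline --- kill $k=0$ by non-singularity, then show $\det M$ modulo a slowly growing product $m=p_1\cdots p_K$ of odd primes avoids the squares, whose density $\prod_j \frac{p_j+1}{2p_j}\to 0$ --- is reasonable, but the statement you reduce everything to is false. The law of $\det M \bmod p$ is \emph{not} asymptotically uniform for a fixed odd prime $p$: the probability that $\det M\equiv 0 \pmod p$ tends to $1-\prod_{k\ge 1}(1-p^{-k})$, which exceeds $1/p$ by a constant (for $p=3$, about $0.44$ versus $1/3$). Concretely, with probability tending to a positive constant (of order $p^{-2}$) the first $n-1$ rows already fail to have full rank over $\FF_p$; on that event \emph{every} cofactor $C_i$ vanishes mod $p$, your conditional characteristic function $\prod_i \cos^2(\pi C_i\xi/p)$ is identically $1$, and $\det M\equiv 0\pmod p$ whatever the last row is. So both ``$\det M \bmod m$ is $o(1)$-close to uniform in total variation'' and ``a positive fraction of the $C_i$ are nonzero mod $p$ with high probability'' fail, and they fail on an event of probability bounded away from zero, not a negligible one. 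The McDiarmid step is also misjustified: altering one of the first $n-1$ rows changes a \emph{row} (not a column) of each of the $n$ cofactor matrices and can flip all of them between singular and nonsingular, so the bounded-differences constant is of order $n$ and the inequality gives nothing. A smaller quantitative slip: with $p_K=o(\sqrt n)$ the modulus is $m=e^{(1+o(1))p_K}$, and your per-character bound of the shape $e^{-cN/m^2}$ with $N\le n$ cannot beat the $m-1$ nontrivial characters unless $m$ is at most a small power of $n$; you need $p_K$ of order $\log n$ or smaller, which is harmless since any $K\to\infty$ suffices.

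The strategy is repairable, because $0$ is a square mod $p$: the atom at $0$ only worsens the factor contributed by each prime from roughly $\tfrac12$ to roughly $\tfrac12\bigl(1+c_p\bigr)\le\tfrac34$, where $c_p=1-\prod_{k\ge1}(1-p^{-k})$, and the product over $K\to\infty$ primes still tends to $0$. What must actually be proved is a joint local limit theorem: the law of $\det M\bmod m$ converges to the product over $p\mid m$ of the measures (mass $c_p$ at $0$, uniform on the nonzero residues otherwise); note that the reductions modulo different primes are far from independent, since the entries lie in $\{-1,0,1\}$, so this really has to be done with characters mod $m$ as you intend. The character bound then requires the genuinely nontrivial input that you are missing: conditioned on the first $n-1$ rows having full rank mod $p$, the cofactor vector (which spans the one-dimensional kernel) has many nonzero coordinates with high probability, uniformly over the small primes $p\mid m$. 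Neither a first-moment computation nor bounded differences supplies this; it needs an argument about kernels of random matrices over $\FF_p$ with your non-uniform entry distribution.
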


The proof of Theorem~\ref{thm:deter} is too long to include here, so we put it in a complementary paper \cite{nubeemet}.

\end{document}